\documentclass[10pt]{amsart}
\usepackage{amsthm}
\usepackage{tikz-cd} 
\usetikzlibrary{cd}
\usepackage{amsmath,amssymb} 
\usepackage{graphicx} 
\usepackage{geometry}
\usepackage[colorlinks,linkcolor=red,anchorcolor=blue,citecolor=green]{hyperref}
\newtheorem{theorem}{Theorem}[section]
\newtheorem{corollary}[theorem]{Corollary}
\newtheorem{lemma}[theorem]{Lemma}
\newtheorem{proposition}[theorem]{Proposition}
\theoremstyle{definition}
\newtheorem{definition}[theorem]{Definition}
\newtheorem{remark}[theorem]{Remark}
\newtheorem{example}[theorem]{Example}

\numberwithin{equation}{section}

\usepackage{indentfirst}
\title{GL-racks and coloring invariants of Legendrian knots}
\author{Zhiyun Cheng}
\address{School of Mathematical Sciences, Laboratory of Mathematics and Complex Systems, MOE, Beijing Normal University, Beijing 100875, China}
\email{czy@bnu.edu.cn}
\author{Zhiyi He}
\address{School of Mathematical Sciences, Beijing Normal University, Beijing 100875, China}
\email{202321130080@mail.bnu.edu.cn}
\begin{document}

\begin{abstract}
In this paper, we explore the algebraic structure of GL-racks, and demonstrate that finite GL-racks decompose canonically into permutation GL-racks and block GL-racks. As a corollary, we verify that two Legendrian knots with the same classical invariants share equivalent coloring invariants with respect to any given finite GL-rack. This partially answers a question proposed in our previous paper \cite{Fundamental-GL-rack-and-classical-invariants}.
\end{abstract}
\subjclass[2020]{57K12, 57K33}
\keywords{Legendrian knot, classical invariants, GL-rack, coloring invariant}
\maketitle

\section{Introduction}
Quandles and racks are non-associative algebraic structures intimately associated with knots and links, whose axioms are motivated from the Reidemeister moves in knot theory. The quandle concept was independently proposed by Joyce~\cite{Joyce-1982} and Matveev~\cite{Matveev-1982} in 1982. They constructed a canonical algebraic counterpart to the knot group, referred to the fundamental quandle or knot quandle. For a given knot $K$, the knot quandle, denoted by $Q(K)$, serves as a almost complete knot invariant. As a natural generalization of quandles, racks were introduced by Fenn and Rourke~\cite{rack-1992} in 1992 during their investigation of framed knots within 3-manifold topology. A wide range of knot invariants can be constructed via quandle structures, among which the quandle coloring invariant is the most elementary. This invariant counts the number of homomorphisms from the knot quandle $Q(K)$ to a finite quandle. In addition, quandle homology and cohomology theories have been well-established in the literature~\cite{quandle-cohomology}. Specifically, $2$-cocycles and $3$-cocycles give rise to enhanced coloring invariants for classical knots and knotted surfaces respectively. Beyond knot theory, quandle theory also intersects with other research directions, including the study of pointed Hopf algebras~\cite{AG2003}, Yang-Baxter deformations~\cite{Eis2005,Eis2014} and Holonomy braidings~\cite{BGPR2020}. Comprehensive reviews of foundational theories and recent advances in this field are available in~\cite{survey-developments,book}.

In recent years, quandles and racks have been introduced into contact geometry to study Legendrian knots. In 2017, Kulkarni and Prathamesh~\cite{On-Rack-Invariants-2017} first introduced rack invariants of Legendrian knots, which distinguish certain Legendrian unknots. Subsequently, in 2021, Ceniceros, Elhamdadi and Nelson~\cite{2021-Ceniceros-Elhamdadi-Nelson} defined the Legendrian rack, an algebraic structure based on the Legendrian Reidemeister moves for the front projection of a Legendrian knot, together with an automorphism $f$ assigned at each cusp, which is compatible with the underlying rack structure. This allows one to define coloring invariant that distinguish some Legendrian knots from their stabilizations. Generalizing the Legendrian rack, instead of using a single automorphism $f$ at cusps, one can assign automorphisms $u$ and $d$ to the upper and lower cusps of an oriented Legendrian knot, which are required to be compatible with the rack structure. This generalized algebraic structure is called \textit{generalized Legendrian rack} or \textit{bi-Legendrian rack}, simply denoted by GL-rack. This notion was independently introduced by Kimura~\cite{Bi-Legendrian-2023} and Karmakar-Saraf-Singh~\cite{GL-rack-Karmakar-Saraf-Singh}. For Legendrian knots, one can define an invariant analogous to the fundamental quandle, which is called the \textit{fundamental GL-rack}. Recently, it was shown in \cite{Fundamental-GL-rack-and-classical-invariants} that if two Legendrian knots of the same topological knot type have isomorphic fundamental GL-racks, then they either have the same Thurston–Bennequin number and the same rotation number, or opposite Thurston–Bennequin numbers and opposite rotation numbers. A natural question is, are there two Legendrian knots with the same classical invariants but distinct fundamental GL-racks? Or, to what extent the classical invariants of a Legendrian knot can determine its fundamental GL-rack. In this paper, in order to answer these questions we prove the following result.

\begin{theorem}\label{theorem1.1}
If two Legendrian knots $K_1$ and $K_2$ have the same classical invariants, then for any finite GL-rack $(X,\ast,u,d)$, we have $\operatorname{Col}_X(K_1)=\operatorname{Col}_X(K_2)$.
\end{theorem}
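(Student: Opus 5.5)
Here is the plan. The abstract tells us the route: a finite GL-rack decomposes canonically into permutation GL-racks and block GL-racks. So the aim is to show that, for a finite GL-rack $X$, every $X$-coloring of a front diagram is confined to a piece on which the count depends only on $\operatorname{tb}$ and $\operatorname{rot}$. Throughout we use the GL-rack axioms. Recall that for a finite rack, each map $y\mapsto y\ast x$ is a bijection. The compatibility conditions for $u$ and $d$ force $u$ and $d$ to commute with all right translations, and also fix the relation between $ud$ (or $du$) and the rack's canonical automorphism $\theta(x)=x\ast x$; this relation comes from the Legendrian Reidemeister I move.

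\textbf{Step 1 (structure).} First prove the decomposition theorem. Consider the group $G$ generated by the right translations $R_x$ together with $u$ and $d$. Since $X$ is finite, each element of $G$ has finite order, and $X$ splits into $G$-orbits, each of which is a sub-GL-rack. The "permutation" pieces are those where $y\ast x=\sigma(y)$ for a single permutation $\sigma$ commuting with $u,d$. The "block" pieces are those where the operation is determined on blocks and $u$, $d$ act as powers of a common map. I would take the decomposition as established by the paper's structure theorem and use it as a black box for the counting argument.

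\textbf{Step 2 (reduction of colorings).} A coloring of a connected front $K$ by $X$ has image in a single orbit. This is because arcs meeting at crossings and cusps are related by elements of $G$, and the knot is connected. Hence $\operatorname{Col}_X(K)=\sum_i \operatorname{Col}_{X_i}(K)$ over the canonical components $X_i$. It therefore suffices to treat one permutation GL-rack and one block GL-rack.

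\textbf{Step 3 (counting on each piece).} For a permutation GL-rack with $y\ast x=\sigma(y)$, go around the knot starting from one arc with color $c$. Each undercrossing applies $\sigma^{\pm1}$, according to its sign, and each cusp applies $u$ or $d$, according to its type and orientation. Since $\sigma$, $u$ and $d$ commute, closing up imposes exactly one condition:
$$\sigma^{w}\,u^{a}\,d^{b}(c)=c.$$
Here $w$ is the writhe, and $a,b$ are signed counts of up and down cusps. Now $w$, together with the cusp counts, is expressed through $\operatorname{tb}=w-\tfrac12\#\text{cusps}$ and $\operatorname{rot}=\tfrac12(\#\text{down cusps}-\#\text{up cusps})$. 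Using the relation between $ud$ and $\sigma$ from Step 1, the word collapses to a form depending only on $(\operatorname{tb},\operatorname{rot})$. So the number of colorings is the number of fixed points of a fixed map determined by the classical invariants.

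For block pieces, the operation does depend on the other arc's color, so some genuine knot-dependent coloring data remains. I would try to factor the coloring count as (number of colorings of $K$ by an underlying quandle-like quotient, which is a topological invariant) times (a count of lifts governed by a closing condition like the one above). The topological invariance of the first factor uses that $K_1$ and $K_2$ have the same knot type, which is part of "same classical invariants".

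\textbf{Main obstacle.} I expect the main obstacle to be the block case. There one must show that the lift count is independent of the particular coloring and depends only on $\operatorname{tb}$ and $\operatorname{rot}$. This needs the precise form of the block structure, namely that $u$ and $d$ act within blocks by maps commuting with the induced quandle action. My first attempt would be to show that the fiber over each quotient coloring is a torsor, or empty, under a group determined by one closing equation, exactly as in the permutation case.
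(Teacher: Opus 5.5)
Your reduction to orbits and your permutation case are fine and agree with the paper (Theorems 3.10 and 4.2). Each $G$-orbit is a GL-rack whose $\Delta$-cycles all have one length, so it is a permutation or block GL-rack in the paper's sense. Your description of block pieces is not accurate, though: $u,d$ need not be powers of one map, and in general only $ud=du=\Delta^{-1}$ and $u\Delta=\Delta u$ hold.

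\textbf{The gap is the block case.} Both factors of the factorization you propose fail.

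First, the quotient $\widetilde{X}$ is a GL-quandle, and when $\tilde u\neq\operatorname{id}$ its coloring number is not a topological invariant. Take the trivial quandle on three points with $\tilde u$ a $3$-cycle and $\tilde d=\tilde u^{-1}$. The standard Legendrian unknot ($rot=0$) then has $3$ colorings, and its stabilizations ($rot=\pm1$) have none. That $\operatorname{Col}_{\widetilde{X}}$ depends only on the classical invariants is Kimura's Theorem 4.1, which is itself proved by stabilization.

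Second, and more seriously, the lift count is not independent of $\psi$. You are right that each fiber is empty or has exactly $c$ elements (Theorem 3.11). But which case occurs is decided by a monodromy $\Delta^{m(\psi)}$ on the block over $\psi(x_1)$, and $m(\psi)$ depends on the colors at the crossings, not only on $w$ and the cusp counts. Here is an example:
\begin{itemize}
\item Let $Q$ be a finite quandle and $\phi$ a $\mathbb{Z}_c$-valued quandle $2$-cocycle.
\item On $Q\times\mathbb{Z}_c$ put $(a,s)\ast(b,t)=(a\ast b,\,s+1+\phi(a,b))$, $u=\operatorname{id}$ and $d(a,s)=(a,s-1)$.
\item Since $\phi(a,a)=0$, we get $\Delta(a,s)=(a,s+1)$. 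So this is a block GL-rack with $\widetilde{X}=Q$ and $\tilde u=\tilde d=\operatorname{id}$.
\item A coloring $\psi$ lifts if and only if its cocycle weight $\Phi_\phi(\psi)=\sum_\tau\varepsilon_\tau\,\phi(\text{colors at }\tau)$ is congruent mod $c$ to a number fixed by $tb$ and $rot$.
\end{itemize}
These weights are in general not constant over colorings; that is exactly why cocycle invariants refine coloring numbers. So what must be compared for $K_1$ and $K_2$ is the number of $\psi$ with nonempty fiber, and nothing in your plan controls it.

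\textbf{The missing ingredient} is the Fuchs--Tabachnikov stabilization theorem (Theorem 2.1), which your proposal never uses. Equal classical invariants give $S_+^NS_-^N(K_1)\simeq S_+^NS_-^N(K_2)$ for all sufficiently large $N$. Performing $S_+^NS_-^N$ at the arc $x_1$ only replaces $u^{p_1}d^{q_1}$ by $u^{2N+p_1}d^{2N+q_1}$, that is, it inserts $\Delta^{-2N}$ into one relation. Choosing $N$ with $c\mid 2N$ makes this invisible to $X$-colorings. Hence $\operatorname{Col}_X(K_i)=\operatorname{Col}_X(S_+^NS_-^N(K_i))$, and the fibers transfer $\psi$ by $\psi$ through the isomorphism of fundamental GL-racks (Lemma 4.3 and Theorem 4.4). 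As the paper remarks, running this argument with $N$ a multiple of the order of $\Delta$ proves the theorem for every finite GL-rack at once, with no decomposition needed.
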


If the case of opposite sign in the result mentioned above can be ruled out, then as invariants of Legendrian knots, the Thurston-Bennequin number and rotation number can be inserted between the fundamental GL-rack $\operatorname{GLR}(K)$ and the coloring invariant $\operatorname{Col}_X(K)=|\operatorname{Hom}(\operatorname{GLR}(K), X)|$ for any finite GL-rack $(X, \ast, u, d)$.

The rest of this paper is organized as follows. In Section \ref{section2}, we review background material on quandles, GL-racks, and Legendrian knots. In Section \ref{section3}, we analyze the internal structure of GL-racks, showing that any GL-rack decomposes into permutation GL-racks and block GL-racks. The corresponding coloring invariants of Legendrian knots are discussed. In Section \ref{section4}, by using the results from Section \ref{section3}, we prove that two Legendrian knots with the same classical invariants have identical coloring invariant with respect to any given finite GL-rack. Some additional results are discussed in Section \ref{section5}.

\section{Preliminaries}\label{section2}
In this section, we review some background material related to GL-racks and Legendrian knots.
\subsection{Legendrian knots} A Legendrian knot $K$ in a $3$-manifold equipped with a contact structure is a smoothly embedded $S^1$ whose tangent vectors lie in the contact planes. More often, we consider $\mathbb{R}^3$ with the standard contact structure $\xi_{\text{std}}=\text{span}\{\frac{\partial}{\partial y}, \frac{\partial}{\partial x}+y\frac{\partial}{\partial z}\}$. As in the case of topological knots, we are interested in their classification up to Legendrian isotopy. Legendrian knots admit two distinct projections: the Lagrangian projection and the front projection. In this paper, we consider the front projection $\Pi:\mathbb{R}^3\to\mathbb{R}^2:(x, y, z)\to(x, z)$. Here we present the formulas for computing the Thurston-Bennequin number $tb$ and the rotation number $rot$, two classical invariants of Legendrian knots from a given front projection, as well as their behaviors under stabilization. For more details on Legendrian knots, as well as the geometric meanings and properties of these two classical invariants, we refer the reader to~\cite{Etnyre}. Let $D$ be the front diagram of a given oriented Legendrian knot $K$. We use $u(D)$ and $d(D)$ to denote the number of upward-pointing cusps and the number of downward-pointing cusps of $D$, respectively, and use $w(D)$ to denote the writhe of $D$, that is, the number of positive crossings minus the number of negative crossings. Then the Thurston-Bennequin number and the rotation number are given by the following formulas:
\begin{center}
$tb(K)=w(D)-\frac{1}{2}\bigl(u(D)+d(D)\bigr) \text{ and } rot(K)=\frac{1}{2}\bigl(d(D)-u(D)\bigr)$.
\end{center}

Figure \ref{fig1} illustrates the stabilization operations on a Legendrian knot $K$, denoted by $S_+$ and $S_-$ for positive stabilization and negative stabilization, respectively. The Thurston-Bennequin number and the rotation number transform under stabilization as follows:
\begin{center}
$tb(S_{\pm}(K)) = tb(K) - 1 \text{ and } rot(S_{\pm}(K)) = rot(K) \pm 1$.
\end{center}
Stabilization plays an essential role for Legendrian knots, as can be seen from the following theorem.

\tikzset{every picture/.style={line width=0.75pt}} 
\begin{figure}[h]
\begin{tikzpicture}[x=0.75pt,y=0.75pt,yscale=-1,xscale=1]

\draw    (550.72,85.58) .. controls (556.44,85.21) and (586.58,83.3) .. (596.11,95.47) .. controls (605.65,107.64) and (624.51,109.91) .. (625.78,109.91) .. controls (627.05,109.91) and (617.47,109.37) .. (606.03,112.69) .. controls (594.59,116) and (589.47,122.53) .. (580.92,125.96) .. controls (572.37,129.39) and (553.31,130.93) .. (553.31,130.93) .. controls (553.31,130.93) and (575.16,131.84) .. (593.99,144.21) .. controls (612.83,156.57) and (628.26,152.63) .. (626.14,153.55) ;
\draw   (573.93,82.24) .. controls (576.69,84.86) and (579.65,86.64) .. (582.78,87.61) .. controls (579.46,87.47) and (575.95,88.18) .. (572.25,89.7) ;
\draw    (626.19,180.66) .. controls (620.46,180.24) and (590.27,178.07) .. (580.65,191.19) .. controls (571.04,204.31) and (552.13,206.7) .. (550.86,206.69) .. controls (549.58,206.69) and (559.18,206.13) .. (570.64,209.76) .. controls (582.09,213.39) and (587.18,220.45) .. (595.74,224.2) .. controls (604.3,227.94) and (623.39,229.66) .. (623.39,229.66) .. controls (623.39,229.66) and (602.52,229.04) .. (583.58,242.34) .. controls (564.65,255.64) and (549.1,250.98) .. (551.22,251.98) ;
\draw   (588.14,180.51) .. controls (591.99,181.74) and (595.58,182.14) .. (598.88,181.69) .. controls (595.86,182.98) and (593.12,185.09) .. (590.67,188.06) ;
\draw    (451.63,137.87) -- (507.35,122.67) ;
\draw [shift={(509.28,122.15)}, rotate = 164.75] [color={rgb, 255:red, 0; green, 0; blue, 0 }  ][line width=0.75]    (10.93,-3.29) .. controls (6.95,-1.4) and (3.31,-0.3) .. (0,0) .. controls (3.31,0.3) and (6.95,1.4) .. (10.93,3.29)   ;
\draw    (454.98,197.16) -- (511.82,209.36) ;
\draw [shift={(513.78,209.78)}, rotate = 192.11] [color={rgb, 255:red, 0; green, 0; blue, 0 }  ][line width=0.75]    (10.93,-3.29) .. controls (6.95,-1.4) and (3.31,-0.3) .. (0,0) .. controls (3.31,0.3) and (6.95,1.4) .. (10.93,3.29)   ;
\draw    (306.67,164.61) .. controls (355.95,145.71) and (412.42,156.09) .. (437.72,170.2) ;
\draw   (354.42,150.5) .. controls (358.42,152.57) and (362.42,153.82) .. (366.41,154.23) .. controls (362.42,154.64) and (358.42,155.89) .. (354.42,157.96) ;

\draw (451.7,102.91) node [anchor=north west][inner sep=0.75pt]   [align=left] {$\displaystyle S_{+}$};
\draw (451.44,213.1) node [anchor=north west][inner sep=0.75pt]   [align=left] {$\displaystyle S_{-}$};
\end{tikzpicture}
\caption{Stabilization}
\label{fig1}
\end{figure}

\begin{theorem}\cite{Fuchs-Tabachnikov-1997}\label{theorem2.1}
Any two Legendrian knots of the same knot type become Legendrian isotopic after a sequence of stabilizations.
\end{theorem}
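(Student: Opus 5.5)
The plan is to reduce the statement to a diagrammatic one about fronts. Two facts carry the reduction: every topological knot diagram admits a front ``approximation,'' and the classical Reidemeister moves become Legendrian moves once enough zigzags are inserted. Throughout I use the Legendrian Reidemeister moves for fronts (the three front moves of Swiatkowski), which generate Legendrian isotopy. I also use the description of $S_\pm$ as insertion of a small zigzag, as in Figure \ref{fig1}.

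The first step is the \emph{mobility of zigzags}. I would show, by a short sequence of Legendrian Reidemeister moves, that a zigzag of a given type can be slid along the front past a cusp and past a crossing, on either strand. Consequently $S_\pm(K)$ is well defined up to Legendrian isotopy, independent of where the zigzag is inserted. It also follows that $S_+$ and $S_-$ commute and commute with Legendrian isotopy.

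The second step is the \emph{Legendrian approximation}. Take any planar diagram of the topological knot, isotoped so that it has finitely many vertical tangencies. Replace each vertical tangency by a cusp. At each crossing where the over-strand has larger slope than required (since $y=dz/dx$, the strand in front must have smaller slope), insert zigzags so that the slopes are corrected. Near each point, any two such approximations of the same diagram differ by Legendrian isotopy and zigzag insertions; by the first step these zigzags can be collected anywhere. Hence the approximation is well defined up to isotopy and stabilizations. Every front is in particular its own approximation.

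The third step, which I expect to be the main obstacle, is the \emph{lifting of classical Reidemeister moves}. Given fronts $D_1,D_2$ of $K_1,K_2$, I regard them as topological diagrams related by a sequence of planar isotopies and Reidemeister moves I–III. For each move $M$ between diagrams $E$ and $E'$, I must show that suitable stabilizations of the approximations of $E$ and $E'$ are Legendrian isotopic. I would treat this case by case and first try to localize. Inside a disk, after adding enough zigzags on the strands so that all slopes are small and the strands can be made nearly horizontal, each local picture becomes one of the Legendrian moves. The main difficulties are Reidemeister I, which is realized exactly by a zigzag (a stabilization), and planar rotations through vertical, which create or cancel a pair of cusps. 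Both are absorbed by zigzags of appropriate sign.

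Finally, I would chain these steps together. They give $S_+^{a_1}S_-^{b_1}(K_1)\simeq S_+^{a_2}S_-^{b_2}(K_2)$ for some nonnegative integers. Applying further stabilizations to each side, which is legitimate because stabilization commutes with isotopy, yields a common Legendrian knot.
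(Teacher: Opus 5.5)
The paper does not prove this statement. It is quoted from Fuchs--Tabachnikov and used as a black box, so there is no in-paper argument to compare with. Your outline is the standard front-diagram proof of that theorem: mobility of zigzags, front approximation of an arbitrary diagram, and a check that each elementary change of diagram alters the approximation only by front moves and zigzags. Steps 1, 2 and the final chaining are fine. One thing still needs proof: your assertion in Step 2 that the choices made when repairing a crossing (which strand carries the zigzag, and hence its sign) matter only up to further stabilization. The real weak point is Step 3. It carries all the content, it is only announced, and part of the mechanism you describe there is wrong.

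\emph{Reidemeister I.} It is not ``exactly a zigzag.'' A positive kink is the front move I (the swallowtail), which is a Legendrian isotopy and needs no stabilization. It is the negative kink that costs stabilizations.

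\emph{Reidemeister II.} Flattening the strands does not turn this into a front move. Suppose neither strand has a cusp in the disk, so both are graphs over $x$. The two new crossings are then consecutive transverse zeros of the difference of two functions, so the slope difference has opposite signs at them. Hence exactly one of the two crossings violates the rule that the front strand has the smaller slope, however small the slopes are. Geometrically, the two Legendrian arcs would meet at the moment of tangency. The move is realized instead by inserting a zigzag on one strand and pushing one of its cusps across the other strand by front move II.

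\emph{A missing planar-isotopy event.} Your list omits a crossing sliding past a vertical tangency of one of its strands. That strand's slope passes through infinity, so the slope order at the crossing reverses, and a correcting zigzag must appear or disappear.

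\emph{How to finish.} Observe that the only obstruction to a diagram move being a front move is the slope condition at crossings. For Reidemeister III this means the three slopes at the triple point must be ordered like the three heights. Any crossing that violates the condition is repaired by a single zigzag. Then each event changes the approximation by front moves and finitely many stabilizations, which is exactly what your chaining argument needs. The events to check are: birth or death of a pair of vertical tangencies, a crossing passing through a vertical tangency, and Reidemeister I--III.
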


\subsection{Generalized Legendrian racks} A \textit{generalized Legendrian rack}, abbreviated as \textit{GL-rack}, is an algebraic structure generalizing the classical rack to accommodate the study of Legendrian knots. Differently from an ordinary rack, a GL-rack is equipped with two additional compatible operations, denoted by $u$ and $d$. These additional operations enable the construction of invariants for Legendrian knots. In what follows, we provide the formal definition of a GL-rack along with some illustrative examples.

\begin{definition}
A rack is a pair $(X, \ast)$, where $X$ is a set and $\ast$ is a binary operation on $X$ satisfying the following conditions:
\begin{enumerate}
\item For any $y, z\in X$, there exists a unique $x\in X$ such that $x\ast y=z$;
\item $(x\ast y)\ast z=(x\ast z) \ast (y\ast z)$ for all $x,y,z\in X$.
\end{enumerate}
In other words, for any $x\in X$, the operation $\ast x: X\to X$ defines an automorphism of $X$. If the operation $\ast$ further satisfies $x\ast x=x$ for all $x\in X$, then $(X, \ast)$ is called a \textit{quandle} \cite{Joyce-1982,Matveev-1982}. These three axioms correspond to the three Reidemeister moves in knot theory.
\end{definition}

\begin{definition}\cite{GL-rack-Karmakar-Saraf-Singh,Bi-Legendrian-2023}\label{definition2.3}
    A \textit{generalized Legendrian rack} (GL-rack) is a quadruple $(X, \ast, u, d)$, where $(X, \ast)$ is a rack and $u, d$ are two maps on $X$ subject to the following conditions:
\begin{enumerate}
\item[(1)] $ud(x\ast x)=du(x\ast x)=x$;
\item[(2)] $u(x\ast y)=u(x)\ast y$, $d(x\ast y)=d(x)\ast y$;
\item[(3)] $x\ast u(y)=x\ast d(y)=x\ast y$.
\end{enumerate}
\end{definition}

In particular, a GL-rack $(X, \ast, u, d)$ is called a \textit{GL-quandle} if $x\ast x=x$ for all $x\in X$. Note that for a GL-quandle, $u$ and $d$ are inverses of each other, since $ud(x)=du(x)=x$.

Recently, Ta conducted a detailed and thorough study on the algebraic structure of GL-rack \cite{Ta2025}. In particular, Ta showed that the definition of GL-rack given in Definition \ref{definition2.3} can be greatly simplified as follows: a GL-rack is a rack $\{X, \ast\}$ equipped with an automorphism $u$ such that $u(x\ast y)=u(x)\ast y$ for any $x, y\in X$. Here the map $d$ in Definition \ref{definition2.3} can be expressed as $d(x)=u^{-1}(x)\ast^{-1}u^{-1}(x)$. Although the definition given by Ta is more concise than the original one, we still prefer to write down the Definition \ref{definition2.3}, as it relates more directly to the coloring invariants of Legendrian knots. Moreover, the other conditions in Definition \ref{definition2.3} can be regarded as properties that we will frequently use later.

Next, we present an example of GL-rack, say \textit{permutation GL-rack}, which will be frequently used later.

\begin{example}\label{example2.4}
Let $X$ be a finite set and $\sigma$ a permutation on $X$. A special example of GL-rack is the permutation GL-rack $(X,\ast_{\sigma})$, where $x \ast_{\sigma} y = \sigma(x)$ and the maps $u$ and $d$ need to satisfy $ud = \sigma^{-1}$. For example, consider the permutation rack $X = \{1,2,3\}$ with $\sigma=(123)$, $u=\text{id}$, and $d=(132)$. The operation table for $x \ast y$ is given as follows:
 \[
 \begin{array}{c|ccc}
 \ast & 1 & 2 & 3 \\
 \hline
 1 & 2 & 2 & 2 \\
 2 & 3 & 3 & 3 \\
 3 & 1 & 1 & 1 \\
 \end{array}
 \]
which satisfies the defining conditions of a permutation GL-rack.
\end{example}

Similar to the coloring invariant of topological knots with respect to a given finite quandle, we can define the coloring invariant of Legendrian knots with respect to a given finite GL-rack $X$. By a coloring, we mean an assignment of elements of $X$ to every semi-arc in the front projection of a Legendrian knot $K$. Here a semi-arc refers to a segment of the diagram bounded by two undercrossings, two cusps, or one undercrossing and one cusp. The compatibility of the maps $u$ and $d$, together with the rules at crossings, ensures that the number of colorings is an invariant of Legendrian knots. The rules of coloring at cusps and crossings are shown in Figure \ref{fig2}.

\begin{figure}[h]
    \centering
\tikzset{every picture/.style={line width=0.75pt}} 
\begin{tikzpicture}[x=0.75pt,y=0.75pt,yscale=-1,xscale=1]

\draw    (354.25,109.54) .. controls (378.68,111.17) and (401.8,144.13) .. (412.57,143.08) ;
\draw    (354.25,167.18) .. controls (374.89,169.8) and (402.03,141.93) .. (410.77,143.08) ;
\draw    (586.75,108.77) -- (638.97,173.22) ;
\draw    (638.97,108.77) -- (616.45,137.09) ;
\draw    (609.72,144.95) -- (586.75,173.24) ;
\draw    (465.87,108.69) .. controls (484.76,109.81) and (513.42,143.27) .. (524.18,142.22) ;
\draw    (465.87,166.32) .. controls (486.5,168.94) and (513.65,141.08) .. (522.39,142.22) ;
\draw   (372.15,108.69) .. controls (373.76,113.35) and (375.91,116.99) .. (378.6,119.61) .. controls (375.37,118.02) and (371.61,117.45) .. (367.31,117.91) ;
\draw   (476.85,159.74) .. controls (480.96,161.26) and (484.75,161.64) .. (488.19,160.89) .. controls (485.08,162.77) and (482.3,165.8) .. (479.85,169.95) ;
\draw   (627.09,151.39) .. controls (627.77,156.36) and (629.18,160.46) .. (631.29,163.71) .. controls (628.46,161.33) and (624.9,159.81) .. (620.62,159.15) ;
\draw    (173.02,108.77) .. controls (148.07,110.63) and (121.39,143.35) .. (109.7,142.31) ;
\draw    (173.02,166.41) .. controls (150.62,169.03) and (121.14,141.16) .. (111.65,142.31) ;
\draw   (151.4,110.32) .. controls (149.56,114.96) and (147.16,118.57) .. (144.2,121.13) .. controls (147.73,119.6) and (151.82,119.1) .. (156.48,119.63) ;
\draw    (287.49,108.77) .. controls (268.98,109.87) and (235.86,143.35) .. (224.17,142.31) ;
\draw    (287.49,166.41) .. controls (265.09,169.03) and (235.61,141.16) .. (226.12,142.31) ;
\draw   (265.58,157.05) .. controls (260.84,157.56) and (256.69,157.07) .. (253.13,155.54) .. controls (256.1,158.08) and (258.47,161.64) .. (260.25,166.21) ;

\draw (347.13,91.21) node [anchor=north west][inner sep=0.75pt]   [align=left] {$\displaystyle x$};
\draw (457.19,169.64) node [anchor=north west][inner sep=0.75pt]   [align=left] {$\displaystyle x$};
\draw (571.88,171.29) node [anchor=north west][inner sep=0.75pt]   [align=left] {$\displaystyle x$};
\draw (461.36,82.87) node [anchor=north west][inner sep=0.75pt]   [align=left] {$\displaystyle u( x)$};
\draw (340.07,176.78) node [anchor=north west][inner sep=0.75pt]   [align=left] {$\displaystyle d( x)$};
\draw (578.34,87.03) node [anchor=north west][inner sep=0.75pt]   [align=left] {$\displaystyle y$};
\draw (625.75,89.55) node [anchor=north west][inner sep=0.75pt]   [align=left] {$\displaystyle x\ast y$};
\draw (159.98,176.01) node [anchor=north east][inner sep=0.75pt]  [xscale=-1] [align=left] {$ $};
\draw (169.07,91.21) node [anchor=north west][inner sep=0.75pt]   [align=left] {$\displaystyle x$};
\draw (159.07,173.46) node [anchor=north west][inner sep=0.75pt]   [align=left] {$\displaystyle d( x)$};
\draw (274.45,176.01) node [anchor=north east][inner sep=0.75pt]  [xscale=-1] [align=left] {$ $};
\draw (280.55,91.21) node [anchor=north west][inner sep=0.75pt]   [align=left] {$\displaystyle u( x)$};
\draw (276.38,173.46) node [anchor=north west][inner sep=0.75pt]   [align=left] {$\displaystyle x$};
\end{tikzpicture}
\caption{Rules of coloring}
\label{fig2}
\end{figure}

\begin{example}\label{example2.5}
Consider the Legendrian knot depicted in Figure \ref{fig3} and the GL-rack introduced in Example \ref{example2.4}, we readily derive the necessary condition $ud(x) = x$ for a valid coloring. By the defining property of permutation racks, this is equivalent to $\sigma^{-1}(x) = x$ for all $x$ in the rack. Since the permutation $\sigma = (123)$ has no fixed points, it follows that the number of colorings of the Legendrian unknot depicted in Figure \ref{fig3} by this GL-rack is $0$.

\begin{figure}[h]
\centering
\tikzset{every picture/.style={line width=0.75pt}} 
\begin{tikzpicture}[x=0.75pt,y=0.75pt,yscale=-1,xscale=1]
\draw    (147,116) .. controls (162.25,116.46) and (173.25,84.46) .. (193.25,84.21) .. controls (213.25,83.96) and (229.75,116.46) .. (247,116) ;
\draw    (247,116) .. controls (231.76,115.38) and (220.37,146.17) .. (200.37,146.15) .. controls (180.36,146.12) and (164.25,115.8) .. (147,116) ;
\draw   (167.56,96.48) .. controls (170.58,95.87) and (173.09,94.77) .. (175.06,93.2) .. controls (173.6,95.26) and (172.65,97.83) .. (172.23,100.87) ;

\draw (164.32,62.78) node [anchor=north west][inner sep=0.75pt]  [rotate=-359.94] [align=left] {$\displaystyle ud( x) =x$};
\draw (185,147.11) node [anchor=north west][inner sep=0.75pt]   [align=left] {$\displaystyle d( x)$};
\end{tikzpicture}
\caption{Legendrian unknot}
\label{fig3}
\end{figure}
\end{example}

For topological knots, there exists an almost complete invariant called the \textit{knot quandle}, also known as the \textit{fundamental quandle}~\cite{Joyce-1982,Matveev-1982}. In analogy with this construction, we define the \textit{fundamental GL-rack} for Legendrian knots, which is an invariant of Legendrian knots and induces the corresponding coloring invariant for Legendrian knots with respect to a given finite GL-rack. For a more precise definition, we refer the reader to \cite{GL-rack-Karmakar-Saraf-Singh}.

\begin{definition}
Let $K$ be a Legendrian knot in the standard contact $\mathbb{R}^3$. The \textit{fundamental GL-rack} of $K$, denoted by $\operatorname{GLR}(K)$, is the free GL-rack generated by the arcs of a front projection of $K$, subject to the GL-rack relations assigned at crossings and cusps. 
\end{definition}

For a finite GL-rack $X$, the number of colorings of $K$ with respect to $X$ can be reinterpreted as
 \[
 \operatorname{Col}_X(K) = \bigl\lvert\operatorname{Hom}\bigl(\operatorname{GLR}(K),\, X\bigr)\bigr\rvert,
 \]
 which counts the GL-rack homomorphisms from the fundamental GL-rack of \(K\) to \(X\).

\begin{example}
    The \textit{fundamental GL-rack} of the Legendrian unknot $K$ in Example \ref{example2.5} can be expressed as
    \begin{center}
        $\operatorname{GLR}(K) = \langle x \mid ud(x) = x \rangle.$
    \end{center}
\end{example}

In \cite{GL-rack-Karmakar-Saraf-Singh}, infinitely many distinct Legendrian unknots and Legendrian trefoils are distinguished via the fundamental GL-racks. However, the famous Chekanov-Eliashberg knots have isomorphic fundamental GL-racks.

\section{Structures of GL-racks and coloring invariants of Legendrian knots}\label{section3}
In this section, we analyze the internal structure of GL-racks. We show that any finite GL-rack can be uniquely decomposed into permutation racks and block GL-racks, which act independently of each other. As a corollary, the number of colorings of a Legendrian knot with respect to a given finite GL-rack can be expressed as the sum of the coloring numbers with respect to the corresponding permutation racks and block GL-racks.

\subsection{Structure of GL-racks} We attempt to analyze the ``local properties'' of any given finite GL-rack. Let $(X, \ast, u, d)$ be a finite GL-rack, we define the map $\Delta \colon X \to X$ by
\[
\Delta(x) = x \ast x.
\]
It was proved in \cite[Proposition 1.5]{Szy2018} that $\Delta$ is a rack automorphism. For a GL-rack, it is readily seen that $\Delta = (ud)^{-1}$, and $\Delta$ can be regarded as an element of the symmetric group $S_{|X|}$. Naturally, the same holds for $u$ and $d$. Express $\Delta$ uniquely as a product of disjoint cycles $\Delta = \alpha_1 \cdots \alpha_n$, and let $A_i = \operatorname{supp}(\alpha_i)$ be the support of $\alpha_i$. Then $\Delta|_{A_i} = \alpha_i$,  $A_i\cap A_j=\emptyset$ if $i\neq j$, and $X = \bigsqcup_{i=1}^n A_i$. We collect all $A_i$ of equal cardinality into new sets $B_1,\dots,B_m$. In other words, $A_{i_1}$ and $A_{i_2}$ are both contained in the same $B_j$ if and only if they have the same cardinality. The automorphism $\Delta \colon X \to X$ essentially determines the structure of the given GL-rack $X$. In what follows, we present the definition of a block GL-rack and several propositions characterizing how a GL-rack is determined by $\Delta$.

\begin{definition}
A GL-rack is called a \emph{block GL-rack} if every disjoint short cycle in the cycle decomposition of its diagonal map $\Delta$ has the same length. In other words, if $\Delta = \alpha_1\cdots\alpha_n$, then all $\alpha_i$ have the same length.
\end{definition}

\begin{example}\label{example3.2}
Let \(X = \{1,2,3,4,5,6\}\) with \(u=(12)(35)(46)\) and \(d=\text{id}_X\). The operation table for \(x \ast y\) is given as follows:
    \[
        \begin{array}{c|cccccc}
        \ast & 1 & 2 & 3 & 4 & 5 & 6\\
         \hline
         1 & 2 & 2 & 2 & 2 & 2 & 2\\
         2 & 1 & 1 & 1 & 1 & 1 & 1\\
         3 & 4 & 4 & 5 & 5 & 5 & 5\\
         4 & 5 & 5 & 6 & 6 & 6 & 6\\
         5 & 6 & 6 & 3 & 3 & 3 & 3\\
         6 & 3 & 3 & 4 & 4 & 4 & 4\\
        \end{array}
    \]
where $\Delta=(ud)^{-1}=(12)(35)(46)$. Thus $(X, \ast, u, d)$ is a block GL-rack.
\end{example}

From Example \ref{example3.2}, it can be observed that each block of a block GL-rack seems to be independent and non-interacting with the others. Actually, the following conclusion holds.

\begin{proposition}\label{proposition3.3}
Suppose that $(X, \ast, u, d)$ is a finite block GL-rack with $\Delta = \alpha_1\cdots\alpha_n$. Let $A_i$ be the support of $\alpha_i$ and $\big| A_i \big| = c$. Then 
\begin{enumerate}
\item $x\ast y=x\ast y'$ for any $x\in A_i$ and $y, y'\in A_j$;
\item $x\ast y\neq x'\ast y$ for any $x\neq x'\in A_i$ and $y\in A_j$.
\end{enumerate}
In summary, for any $i, j\in\{1, \cdots, n\}$, $A_i\ast A_j=A_k$ for some $k\in\{1, \cdots, n\}$. In particular $A_i \ast A_i = A_i$ $(1\leq i\leq n)$.
\end{proposition}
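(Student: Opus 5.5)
The plan is to use only the rack axioms together with condition (3) of Definition \ref{definition2.3} and the identity $\Delta=(ud)^{-1}$. Part (2) comes directly from the rack axiom, part (1) from the invariance of right multiplication under $u$ and $d$, and the summary statement from the fact that each right translation commutes with $\Delta$. I expect the block hypothesis to be essentially unnecessary; it only guarantees that all the $A_k$ have the same cardinality $c$.

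For (2), fix $y$. By the first rack axiom, the map $\ast y\colon X\to X$ is a bijection, so $x\ast y\neq x'\ast y$ whenever $x\neq x'$. Nothing else is needed here.

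For (1), condition (3) of Definition \ref{definition2.3} gives $x\ast u(y)=x\ast y$ and $x\ast d(y)=x\ast y$. Applying this twice gives $x\ast ud(y)=x\ast d(y)=x\ast y$. Since $\Delta^{-1}=ud$, this says $x\ast\Delta^{-1}(y)=x\ast y$ for all $y$. Iterating, and using that $\Delta$ has finite order, we get $x\ast\Delta^k(y)=x\ast y$ for every $k\in\mathbb{Z}$. Each $A_j$ is a single $\Delta$-orbit, so any $y,y'\in A_j$ satisfy $y'=\Delta^k(y)$ for some $k$, hence $x\ast y=x\ast y'$. This holds for every $x\in X$, not just $x\in A_i$.

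For the summary statement, the key step, and the only one needing a small computation, is that $\ast y$ commutes with $\Delta$. By self-distributivity,
\[
\Delta(x\ast y)=(x\ast y)\ast(x\ast y)=(x\ast x)\ast y=\Delta(x)\ast y .
\]
Consequently $\ast y$ maps the $\Delta$-orbit $A_i$ onto the $\Delta$-orbit of $x\ast y$ for any $x\in A_i$. That image is some single cycle support $A_k$, and by (2) it has cardinality $|A_i|=c$. By (1) the set $A_i\ast y$ does not depend on the choice of $y\in A_j$, so $A_i\ast A_j=A_i\ast y=A_k$. Finally, for $x\in A_i$ we have $x\ast x=\Delta(x)\in A_i$, so $A_i\ast A_i$ meets $A_i$. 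Since $A_i\ast A_i$ is a single orbit $A_k$, this forces $A_i\ast A_i=A_i$.
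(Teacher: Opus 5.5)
Your proof is correct and takes essentially the same route as the paper: you prove (1) by iterating the GL-rack identity $x\ast\Delta^{\pm1}(y)=x\ast y$ along the $\Delta$-orbit $A_j$, and the block statement via $\Delta^m(x)\ast y=\Delta^m(x\ast y)$, so that $\ast y$ carries the orbit $A_i$ onto the orbit of $x\ast y$. The only difference is that you get (2) directly from the bijectivity of $\ast y$, which also shows the equal-length (block) hypothesis is not needed, whereas the paper derives (2) from the orbit identity together with $|A_k|=c$.
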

\begin{proof}
According to the assumption, $X=\bigsqcup_{i=1}^{n} A_i$ is a block GL-rack with an automorphism $\Delta\colon X\to X$ satisfying $\Delta^c=\operatorname{id}$ and $\Delta(A_i)=A_i$. For any $x\in A_i$ and $y\in A_j$, the definition of a GL-rack yields
\begin{center}
$x\ast y = x\ast \Delta(y) = x\ast \Delta^2(y) = \cdots = x\ast \Delta^c(y) = x\ast y,$
\end{center} 
which implies $x\ast y=x\ast y'$ for any $x\in A_i$ and $y, y'\in A_j$. If $x\ast y\in A_k$, then $\Delta^k(x)\ast y=\Delta^k(x\ast y)\in A_k$, that is, $A_i\ast y=A_k$. The second result follows. Therefore, $A_i \ast A_j = A_k$ and $A_i \ast A_i = A_i$ since $\Delta(A_i)=A_i$.
\end{proof}

A particularly special conclusion can be obtained when the number of blocks is one, that is, when $\Delta$ is a full permutation, meaning that the length of $\Delta$ equals the cardinality of the set $X$.

\begin{corollary}
When $\Delta$ is a full permutation on $X$, the GL-rack $(X, \ast, u, d)$ is a permutation GL-rack.
\end{corollary}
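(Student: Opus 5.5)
Since $\Delta$ is a full permutation on $X$, its cycle decomposition consists of a single cycle $\alpha_1$ with support $A_1 = X$. Every cycle in the decomposition therefore has the same length $|X|$, so $(X,\ast,u,d)$ is a block GL-rack with $n=1$. This puts us in the setting of Proposition \ref{proposition3.3} with $A_1 = X$ and $c = |X|$.

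By part (1) of Proposition \ref{proposition3.3}, for any $x\in X$ and any $y,y'\in X = A_1$ we have $x\ast y = x\ast y'$. Hence the right translation by $y$ does not depend on $y$. Define $\sigma\colon X\to X$ by $\sigma(x) = x\ast y$ for any fixed choice of $y$. Taking $y = x$ gives $\sigma(x) = x\ast x = \Delta(x)$, so in fact $x\ast y = \Delta(x)$ for all $x,y$. Since $\Delta$ is a rack automorphism, in particular a bijection, $\sigma = \Delta$ is a permutation of $X$. Therefore $(X,\ast)$ is the permutation rack $(X,\ast_\sigma)$ with $\sigma=\Delta$.

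It remains to check the condition on $u$ and $d$ required in Example \ref{example2.4}, namely $ud = \sigma^{-1}$. This is immediate from the identity $\Delta = (ud)^{-1}$ noted earlier, which follows from axiom (1) of Definition \ref{definition2.3}: $ud(\Delta(x)) = ud(x\ast x) = x$, so $ud = \Delta^{-1} = \sigma^{-1}$. Hence $(X,\ast,u,d)$ is a permutation GL-rack.

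The only step needing care is the passage from ``$x\ast y$ is independent of $y$'' to the identification of $\sigma$ with $\Delta$. This rests on part (1) of Proposition \ref{proposition3.3}, whose own proof uses axiom (3) of Definition \ref{definition2.3} in the form $x\ast\Delta(y) = x\ast y$, iterated along the single cycle that covers all of $X$. The remaining steps are direct substitutions.
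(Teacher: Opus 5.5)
Your proof is correct and follows essentially the same route as the paper, which likewise notes that $x\ast y=x\ast x=\Delta(x)$ for all $y$ (implicitly via Proposition~\ref{proposition3.3}(1) with the single block $A_1=X$) and that $ud=\Delta^{-1}$. These are exactly the two conditions for a permutation GL-rack with $\sigma=\Delta$; your write-up just makes the appeal to Proposition~\ref{proposition3.3} and to axiom (1) explicit.
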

\begin{proof}
In this case, $ud=\Delta^{-1}$ and $x\ast y=x\ast x=\Delta(x)$. The result follows.
\end{proof}

For a given block GL-rack, if we regard each block $A_i$ as a single entity and denote it by $a_i$, we obtain a new set $\widetilde{X}$. We define operations on $\widetilde{X}$ naturally inherited from that on $X$, then we have the following result.

\begin{theorem}\label{theorem3.5}
Let $X = \bigsqcup_{i=1}^{n} A_i$ be a finite block GL-rack. Define $\tilde{X} = \{a_1,\dots,a_n\}$ and the projection map $\pi:X \to \tilde{X}$ with $\pi(A_i)=a_i$. On $\tilde{X}$, we set
\begin{center}
$a_i \tilde{\ast} a_j = \pi(A_i \ast A_j), \quad \tilde{u}(a_i) = \pi\bigl(u(A_i)\bigr), \quad \tilde{d}(a_i) = \pi\bigl(d(A_i)\bigr).$
\end{center}
Then the set $\{\widetilde{X}, \tilde{\ast}, \tilde{u}, \tilde{d}\}$ is a GL-quandle.
\end{theorem}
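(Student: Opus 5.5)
Our plan is to first show that the operations on $\widetilde{X}$ are well defined, and then check the rack and GL-rack axioms by pushing the corresponding identities in $X$ down along $\pi$.

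\textbf{Well-definedness of $\tilde{\ast}$.} By Proposition \ref{proposition3.3}, $A_i\ast A_j=A_k$ for a single index $k$, so $\tilde{\ast}$ is well defined.

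\textbf{Well-definedness of $\tilde u,\tilde d$.} Here one needs $u(A_i)$ to be a single block. Since $\Delta=(ud)^{-1}$, and axiom (2) gives $u(x\ast x)=u(x)\ast x=u(x)\ast u(x)$ by axiom (3), we get $u\Delta=\Delta u$, and similarly $d\Delta=\Delta d$. Hence $u$ maps the $\Delta$-orbit $A_i$ onto the $\Delta$-orbit of $u(x)$ for any $x\in A_i$. As $u$ is a bijection (being a factor of the bijection $(ud)$, with $X$ finite), $u(A_i)$ equals some $A_{k}$. The same argument applies to $d$.

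\textbf{Compatibility of $\pi$.} With well-definedness in hand, $\pi$ is surjective and satisfies
\[
\pi(x\ast y)=\pi(x)\,\tilde\ast\,\pi(y),\qquad \pi u=\tilde u\pi,\qquad \pi d=\tilde d\pi .
\]
Every equational identity of $X$ therefore descends to $\widetilde{X}$, since we can pick preimages.

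\textbf{Rack and GL-rack axioms.}
\begin{itemize}
\item Self-distributivity and axioms (2), (3) of Definition \ref{definition2.3} descend immediately in this way.
\item For right invertibility, note that $\ast y$ is a bijection of $X$ which permutes blocks, since it sends blocks onto blocks by Proposition \ref{proposition3.3}. It therefore induces a bijection of $\widetilde{X}$, namely $\tilde\ast\,\pi(y)$.
\item Idempotency holds because $A_i\ast A_i=A_i$, so $a_i\tilde\ast a_i=a_i$.
\item Axiom (1) then reads $\tilde u\tilde d(a)=a$, which follows from $ud(x\ast x)=x$ by applying $\pi$ and using $\pi(x\ast x)=\pi(x)$.
\end{itemize}
Hence $\widetilde{X}$ is a GL-quandle.

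The main obstacle is the well-definedness of $\tilde u$ and $\tilde d$, i.e., showing that $u$ and $d$ commute with $\Delta$ and permute the blocks. The remaining steps are routine descent along $\pi$.
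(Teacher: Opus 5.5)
Your proposal is correct and follows essentially the same route as the paper. You get $\tilde\ast$ well defined from Proposition~\ref{proposition3.3}, use $u\Delta=\Delta u$ and $d\Delta=\Delta d$ (via axioms (2) and (3)) to see that $u$ and $d$ permute the blocks, and then push the axioms down along $\pi$; your orbit argument replaces the paper's conjugation-of-cycles argument, and deriving $\tilde u\tilde d=\mathrm{id}$ by applying $\pi$ to $ud(x\ast x)=x$ makes explicit a step the paper only asserts (you should note $\tilde d\tilde u=\mathrm{id}$ the same way from $du(x\ast x)=x$).
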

\begin{proof}
We first verify that $(\widetilde{X}, \tilde{\ast})$ is a quandle. Proposition \ref{proposition3.3} guarantees that the operation $\tilde{\ast}:\tilde{X}\times\tilde{X}\to\tilde{X}$ is well-defined. By Proposition \ref{proposition3.3}, the following three identities hold:
\begin{enumerate}
\item[(1)] $a_i \tilde{\ast} a_i = \pi(A_i \ast A_i) = \pi(A_i) = a_i$;
\item[(2)] For any $y\in A_j$ and $z\in A_k$, there exists a unique element $x\in X$ such that $x\ast y=z$. Assume $x\in A_i$, it follows that for any $A_j$ and $A_k$, there exists a unique $A_i$ such that $A_i \ast A_j = A_k$. As a consequence, for any $a_j$ and $a_k$, there exists a unique $a_i$ satisfying $a_i \tilde{\ast} a_j = a_k$;
\item[(3)] $(a_i \tilde{\ast}a_j)\tilde{\ast} a_k=\pi\big((A_i \ast A_j)\ast A_k\big)           = \pi\big((A_i\ast A_k)\ast (A_j\ast A_k)\big)=(a_i\tilde{\ast} a_k)\tilde{\ast}(a_j \tilde{\ast} a_k)$.
\end{enumerate}
Hence $(\widetilde{X}, \tilde{\ast})$ is indeed a quandle. 

Next, we show that the definition of $\tilde{u}$ is well-defined. For this, it suffices to show that for any $1\leq i\leq n$, there exists a unique $j\in\{1, \cdots, n\}$ such that $u(A_i)=A_j$. Notice that 
\begin{center}
$\Delta u(x)=u(x)\ast u(x)=u(x)\ast x=u(x\ast x)=u\Delta(x)$
\end{center}
for any $x\in X$, it follows that $\Delta u=u\Delta$. We remark that a more general equality about $\Delta$ can be found in \cite[Proposition 2.16 (A2)]{Ta2025}. Assume $\Delta=\alpha_1\cdots\alpha_n$, since $\Delta=u\Delta u^{-1}$, we have
\begin{center}
$\alpha_1\cdots\alpha_n=\Delta=u\Delta u^{-1}=(u\alpha_1 u^{-1})\cdots(u\alpha_n u^{-1})$.
\end{center}
It follows that $u\alpha_1u^{-1}, \cdots, u\alpha_nu^{-1}$ is a permutation of $\alpha_1, \cdots, \alpha_n$, since the decomposition of $\Delta$ is unique up to order. For any $1\leq i\leq n$, there exists a unique $j\in\{1, \cdots, n\}$ such that $u\alpha_i u^{-1}=\alpha_j$, which means $u(A_i)=A_j$. For $\tilde{d}$, the proof is similar as above.

In order to finish the proof, we verify that $\tilde{u}$ and $\tilde{d}$ are compatible with $(\widetilde{X},\tilde{\ast})$. It is straightforward to check that:
\begin{enumerate}
\item[(4)] $\tilde{u}\tilde{d}(a_i \mathbin{\tilde{\ast}} a_i)  = \tilde{u}\tilde{d}(a_i) = a_i$ since $\tilde{u} \tilde{d} = id_{\widetilde{X}}$. Similarly, we have $\tilde{d}\tilde{u}(a_i \mathbin{\tilde{\ast}} a_i) = a_i$.
\item[(5)] $\tilde{u}(a_i \tilde{\ast} a_j) = \tilde{u}(\pi(A_i \ast A_j)) = \pi(u(A_i \ast A_j)) = \pi(u(A_i) \ast A_j) = \pi(u(A_i)) \tilde{\ast} a_j = \tilde{u}(a_i) \tilde{\ast} a_j$. Similarly, we have $\tilde{d}(a_i \tilde{\ast} a_j) = \tilde{d}(a_i) \tilde{\ast} a_j$.
\item[(6)] $a_i \tilde{\ast} \tilde{u}(a_j) = a_i \tilde{\ast} \pi(u(A_j)) = \pi(A_i \ast u(A_j)) = \pi(A_i \ast A_j) = a_i \tilde{\ast} a_j$. Similarly, we have $a_i \tilde{\ast} \tilde{d}(a_j) = a_i \tilde{\ast} a_j$.
\end{enumerate}
It is clear that both $\tilde{u}$ and $\tilde{d}$ are bijections, hence $\{\widetilde{X}, \tilde{\ast}, \tilde{u}, \tilde{d}\}$ is a GL-quandle.
\end{proof}

We next consider the case of general GL-racks. For the unique decomposition of $\Delta = \alpha_1 \cdots \alpha_n$, as noted earlier, we group together all short cycles of the same length in the decomposition. We collect all $A_i$ of equal cardinality into new sets $B_1,\dots,B_m$. In other words, $A_{j_1}$ and $A_{j_2}$are both contained in the same $B_j$ if and only if they have the same cardinality.

\begin{lemma}\label{lemma3.6}
For each $1\leq j\leq m$, $\bigl(B_j, \ast|_{B_j}, u|_{B_j}, d|_{B_j}\bigr)$ is a GL-rack.
\end{lemma}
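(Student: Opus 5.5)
To prove Lemma \ref{lemma3.6}, I will show that $B_j$ is closed under $\ast$, $u$ and $d$, and that $\ast y$ restricted to $B_j$ is still a bijection of $B_j$ for each $y\in B_j$. Once closure holds, the remaining GL-rack axioms of Definition \ref{definition2.3} are identities that hold in $X$, so they hold automatically on the subset. Throughout, write $c_j$ for the common cycle length of the cycles of $\Delta$ whose supports make up $B_j$. Then $B_j=\{x\in X : \text{the }\Delta\text{-orbit of }x\text{ has size exactly } c_j\}$.

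\textbf{Closure under $u$ and $d$.} In the proof of Theorem \ref{theorem3.5} it was shown that $\Delta u=u\Delta$, and the same computation gives $\Delta d=d\Delta$. Since $u$ is a bijection commuting with $\Delta$, it maps the $\Delta$-orbit of $x$ bijectively onto the $\Delta$-orbit of $u(x)$. So $u$ preserves orbit size, and therefore $u(B_j)=B_j$. The same argument applies to $d$. This step needs only that $u$ and $d$ are bijections, which holds because $ud=\Delta^{-1}$ and $du=\Delta^{-1}$ are bijections.

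\textbf{Closure under $\ast$.} This is the main point. For any $y\in X$, the map $R_y=\ast y$ is a rack automorphism. Because $\Delta$ is a rack automorphism (\cite[Proposition 1.5]{Szy2018}), or by direct computation, we have
\[
R_y\Delta(x)=(x\ast x)\ast y=(x\ast y)\ast(x\ast y)=\Delta R_y(x).
\]
Hence $R_y$ commutes with $\Delta$, and as before it preserves the size of $\Delta$-orbits. So $x\ast y\in B_j$ whenever $x\in B_j$, for any $y\in X$ and in particular for $y\in B_j$. Since $R_y$ is a bijection of $X$ mapping $B_j$ into $B_j$ and $B_j$ is finite, $R_y|_{B_j}$ is a bijection of $B_j$. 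This gives the unique-solvability axiom within $B_j$.

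\textbf{Remaining axioms.} Self-distributivity, together with conditions (1)--(3) of Definition \ref{definition2.3}, are equational identities holding for all elements of $X$, so they hold for elements of $B_j$. The only subtlety is that every expression stays in $B_j$, and that is exactly what the closure statements guarantee. This completes the proof. Incidentally, $\Delta|_{B_j}$ has all cycles of length $c_j$, so $B_j$ is in fact a block GL-rack.
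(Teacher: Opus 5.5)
Your proposal is correct and follows essentially the paper's argument. You show $u$ and $d$ preserve $B_j$ because they commute with $\Delta$ (the paper phrases this as $u$ conjugating each cycle of $\Delta$ to a cycle of the same length), and $\ast y$ preserves $B_j$ because $\Delta(x\ast y)=\Delta(x)\ast y$ forces $x\ast y$ to have the same $\Delta$-orbit size as $x$. Your explicit finiteness argument that $\ast y$ restricts to a bijection of $B_j$ also supplies the unique-solvability axiom, which the paper's proof passes over by saying only closure needs checking.
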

\begin{proof}
We verify that the structure $\bigl(B_j, \ast|_{B_j}, u|_{B_j}, d|_{B_j}\bigr)$ satisfies the axioms of a GL-rack.

First, we need to verify that $u|_{B_j}$ and $d|_{B_j}$ are both closed on $B_j$. Note that $B_j$ is a union of some supports with equal cardinality. According to the proof of Theorem \ref{theorem3.5}, both $u$ and $d$ send a cycle to another one with the same length, thus their restrictions to $B_j$ are well-defined endomorphisms of $B_j$. Specifically, $u|_{B_j}$ and $d|_{B_j}$ are bijections on $B_j$. Since $u|_{B_j}$ and $d|_{B_j}$ are the restrictions of $u$ and $d$ to $B_j$, respectively, they naturally satisfy conditions (1)--(3) in Definition \ref{definition2.3}.

It remains to prove that $(B_j, \ast|_{B_j})$ is a rack. More precisely, we only need to verify that the operation $\ast|_{B_j}$ is closed on $B_j$. Take arbitrary $x, y\in B_j$, and consider $x\ast y$, $\Delta(x \ast y)$, $\dots$, $\Delta^c(x\ast y)$, where $c=|A_{j_i}|$ for any $A_{j_i}\subset B_j$. Since $\Delta^k(x\ast y)=\Delta^k(x)\ast y$ for $1\le k\le c$ and the order of $x$ with respect to $\Delta$ equals $c$, it follows that the elements $\Delta^k(x) \ast y$ are pairwise distinct. Since $\Delta^c(x)\ast y=x\ast y$, the order of $x\ast y$ with respect to $\Delta$ is also equal to $c$, and hence $x\ast y\in B_j$.
\end{proof}

\begin{proposition}\label{proposition3.7}
For any set $B_j$ $(1\leq j\leq m)$, we have $B_j\ast X=B_j$.
\end{proposition}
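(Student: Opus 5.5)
Fix $j$ and let $c$ be the common length of the $\Delta$-cycles contained in $B_j$. Then $B_j$ is exactly the set of elements whose $\Delta$-orbit has size $c$. I will show two inclusions: $B_j\ast X\subseteq B_j$ and $B_j\subseteq B_j\ast X$.

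The key tool is that $\Delta$ commutes with every right translation $\ast y$. Indeed, by rack self-distributivity,
\[
\Delta(x\ast y)=(x\ast y)\ast(x\ast y)=(x\ast x)\ast y=\Delta(x)\ast y ,
\]
which is the identity $\Delta^k(x\ast y)=\Delta^k(x)\ast y$ already used in the proof of Lemma \ref{lemma3.6}. Since $\ast y$ is a bijection of $X$, it conjugates $\Delta$ to itself. Hence $\ast y$ maps the $\Delta$-orbit of $x$ bijectively onto the $\Delta$-orbit of $x\ast y$.

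For the first inclusion, take $x\in B_j$ and any $y\in X$. The elements $\Delta^k(x)\ast y$ for $0\le k<c$ are pairwise distinct, because $\ast y$ is injective and the $\Delta^k(x)$ are distinct. Moreover $\Delta^c(x\ast y)=\Delta^c(x)\ast y=x\ast y$. So the $\Delta$-order of $x\ast y$ is exactly $c$, and $x\ast y\in B_j$. This is the argument of Lemma \ref{lemma3.6}, now with $y\in X$ arbitrary rather than $y\in B_j$; nothing in it used $y\in B_j$.

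For the reverse inclusion, fix any $y\in X$. The map $\ast y$ is injective and sends the finite set $B_j$ into $B_j$, so it restricts to a bijection of $B_j$. In particular $B_j=B_j\ast y\subseteq B_j\ast X$, which gives $B_j\ast X=B_j$. Alternatively, one can apply the same order-preservation argument to $(\ast y)^{-1}$, which also commutes with $\Delta$.

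The only point needing care is that the orbit-size argument is independent of which block $y$ lies in. Since the commutation identity holds for all $y\in X$, I expect no real obstacle.
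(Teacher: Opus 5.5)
Your proposal is correct and is essentially the paper's proof: the inclusion $B_j\ast X\subseteq B_j$ is the same orbit-size argument based on $\Delta^k(b\ast x)=\Delta^k(b)\ast x$. The only difference is in the reverse inclusion, where the paper invokes Lemma~\ref{lemma3.6} (right translation by an element of $B_j$ maps $B_j$ onto $B_j$), while you use injectivity of $\ast y$ on the finite set $B_j$; your version works for every $y\in X$ and even yields $B_j\ast y=B_j$.
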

\begin{proof}
For any $b \in B_j$ and $x \in X$, consider $b \ast x$, $\Delta(b \ast x)$, \dots, $\Delta^c(b \ast x)$, where $c=|A_{j_i}|$ for any $A_{j_i}\subset B_j$. Since $\Delta^k( b\ast x) = \Delta^k(b) \ast x$ for $1\le k\le c$ and the order of $b$ with respect to $\Delta$ is $c$, it follows that the elements $\Delta^k(b) \ast x$ are pairwise distinct. Since $\Delta^c(b) \ast x = b \ast x$, the order of $b\ast x$ with respect to $\Delta$ is also $c$, and hence $b\ast x\in B_j$. On the other hand, according to Lemma \ref{lemma3.6}, for any $b'\in B_j$ and $x\in B_j\subseteq X$, there exists $b\in B_j$ such that $b\ast x=b'$. Therefore, we conclude that $B_j\ast X=B_j$.
\end{proof}

\begin{remark}
For the unique decomposition of the diagonal map $\Delta$ of a given finite GL-rack, if there is exactly one cycle of a given length, meaning that $B_j$ contains only a single $A_{j_1}$, then $\bigl(B_j, \ast|_{B_j}, u|_{B_j}, d|_{B_j}\bigr)$ is a permutation GL-rack. If the number of cycles of a given length is at least two, i.e., $B_j$ contains more than one $A_{j_i}$, then $\bigl(B_j, \ast|_{B_j}, u|_{B_j}, d|_{B_j}\bigr)$ is a block GL-rack. Therefore, any given finite GL-rack can be decomposed into a collection of permutation GL-racks and block GL-racks. In particular, this decomposition is unique up to order.
\end{remark}

\begin{example}\label{example3.9}
    Let \(X = \{1,2,3,4,5,6\}\) with \(u=(35)(46)\) and \(d=\text{id}\). The operation table for \(x \ast y\) is given as follows:
    \[
        \begin{array}{c|cccccc}
        \ast & 1 & 2 & 3 & 4 & 5 & 6\\
        \hline
        1 & 1 & 1 & 2 & 2 & 2 & 2\\
        2 & 2 & 2 & 1 & 1 & 1 & 1\\
        3 & 4 & 4 & 5 & 5 & 5 & 5\\
        4 & 5 & 5 & 6 & 6 & 6 & 6\\
        5 & 6 & 6 & 3 & 3 & 3 & 3\\
        6 & 3 & 3 & 4 & 4 & 4 & 4\\
        \end{array}
    \]
    Where $\Delta = (ud)^{-1} = (35)(46)$. Define subsets $A_1 = \{1\}$, $A_2 = \{2\}$, $A_3 = \{3,5\}$, $A_4 = \{4,6\}$ and set $B_1 = \{A_1, A_2\}$ and $B_2 = \{A_3,A_4\}$. Then the GL-rack $(X, \ast, u, d)$ can be regarded as being spliced by block GL-racks $\bigl(B_1, \ast|_{B_1}, u|_{B_1}, d|_{B_1}\bigr)$ and $\bigl(B_2, \ast|_{B_2}, u|_{B_2}, d|_{B_2}\bigr)$.
\end{example}

\subsection{The coloring invariants of Legendrian knots}
In this subsection, we clarify the relationship between the number of colorings of a Legendrian knot with respect to a given finite GL-rack and the number of colorings with respect to the permutation GL-racks and block GL-racks obtained from its decomposition. We also relate the number of colorings of a Legendrian knot with respect to a given finite block GL-rack to the number of colorings with respect to the GL-quandle induced from this block GL-rack.

\begin{theorem}\label{theorem3.10}
Let $K$ be a Legendrian knot, and $(X, \ast, u, d)$ a finite GL-rack that decomposes into a disjoint union of block GL-racks $\bigl(B_j, \ast|_{B_j}, u|_{B_j}, d|_{B_j}\bigr)$, then $\operatorname{Col}_X(K) =\sum_j \operatorname{Col}_{B_j}(K)$.
\end{theorem}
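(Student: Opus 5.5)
The plan is to show that every coloring of a front diagram $D$ of $K$ by $X$ takes all its values in a single $B_j$. Since the $B_j$ are pairwise disjoint and each is a GL-rack by Lemma \ref{lemma3.6}, this sets up a bijection between $\operatorname{Hom}(\operatorname{GLR}(K),X)$ and $\bigsqcup_j \operatorname{Hom}(\operatorname{GLR}(K),B_j)$, and the formula follows by counting. In the other direction, a coloring by $B_j$ is automatically a coloring by $X$, because the operations $\ast|_{B_j}, u|_{B_j}, d|_{B_j}$ are restrictions of those of $X$ and the coloring rules of Figure \ref{fig2} are the same. So the content of the proof is the forward direction.

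The key invariant is the $\Delta$-order of a color, i.e.\ the length of the cycle of $\Delta$ containing it. By definition, $B_j$ is exactly the set of elements of a fixed $\Delta$-order $c_j$. I will check that this order is preserved across every semi-arc transition in the diagram.

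At a crossing, the undercrossing changes color from $x$ to $x\ast y$. By Proposition \ref{proposition3.7}, $x\ast y$ lies in the same $B_j$ as $x$, whatever $y$ is. The argument there is that $\Delta^k(x\ast y)=\Delta^k(x)\ast y$ and $\ast y$ is a bijection, so the orders agree. The overarc color is unchanged.

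At a cusp, the colors change by $u$ or $d$ (from $x$ to $u(x)$ or $d(x)$). In the proof of Theorem \ref{theorem3.5} and Lemma \ref{lemma3.6} we saw that $\Delta u=u\Delta$, and likewise for $d$, so $u$ and $d$ carry a $\Delta$-cycle to a cycle of the same length. Hence $u$ and $d$ preserve each $B_j$. One point to be careful about: Lemma \ref{lemma3.6} asserts this closure without restricting to block racks. The commutation $\Delta d=d\Delta$ follows exactly as for $u$, using axiom (2) and axiom (3) of Definition \ref{definition2.3}, and $d$ is a bijection since $ud=\Delta^{-1}$ is.

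Since $K$ is a knot, its diagram is connected. Traversing $K$ passes through all semi-arcs in turn, and each transition is either an undercrossing or a cusp, so all colors share one $\Delta$-order and lie in a single $B_j$. I expect the main (mild) obstacle to be justifying that the traversal argument covers every semi-arc, including the overarc colors. Overarcs are themselves semi-arcs met during the traversal, so this is fine. This gives the disjoint-union decomposition of the coloring set, and hence $\operatorname{Col}_X(K)=\sum_j\operatorname{Col}_{B_j}(K)$.
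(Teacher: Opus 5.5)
Your proposal is correct and is essentially the paper's proof. Both arguments show that every $X$-coloring takes all its values in a single $B_j$, using Proposition~\ref{proposition3.7} at undercrossings and propagating along the knot, and then invoke Lemma~\ref{lemma3.6} to identify these colorings with $B_j$-colorings. Your explicit check that $u$ and $d$ commute with $\Delta$, and hence preserve $\Delta$-order at cusps, fills in a step the paper's induction glosses over: it writes $f(x_2)=f(x_1)\ast f(x_{k_1})$ and drops the factor $u^{p_1}d^{q_1}$.
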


\begin{proof}
    Consider $f\in\operatorname{Hom}(\operatorname{GLR}(K),X)$, where $\operatorname{GLR}(K)$ is generated by arcs $x_i$ subject to relations $r_i: u^{p_i}d^{q_i}(x_i)\ast^{\varepsilon_i} x_{k_i}=x_{i+1}$. Here $p_i$ and $q_i$ denote the number of upward cusps and downward cusps between $x_i$ and $x_{i+1}$, and $\varepsilon_i$ denotes the sign of the crossing before $x_{i+1}$. Note that the fact $ud=du$ \cite{GL-rack-Karmakar-Saraf-Singh} is used here. If $f(x_1)\in B_j$ for some $j$, then by Proposition \ref{proposition3.7} we have $f(x_2)=f(x_1) \ast f(x_{k_1})\in B_j$. By induction, all $f(x_i)$ belong to the same $B_j$. Lemma \ref{lemma3.6} shows that each $\bigl(B_j, \ast|_{B_j}, u|_{B_j}, d|_{B_j}\bigr)$ is a GL-rack, so $f\in\operatorname{Hom}(\operatorname{GLR}(K),B_j)$. Thus $\operatorname{Col}_X(K)=\sum_j \operatorname{Col}_{B_j}(K)$.
\end{proof}

For a Legendrian knot $K$ and a given finite block GL-rack $(X,\ast,u,d)$, we analyze the relation between colorings of $K$ with respect to $(X,\ast,u,d)$ and colorings of its projection $(\widetilde{X}, \tilde{\ast}, \tilde{u}, \tilde{d})$. Consider the following diagram of maps.
\[
\begin{tikzcd}
    & \operatorname{GLR}(K) \arrow[r,"\phi"] \arrow[rd,"\psi"] & X \arrow[d,"\pi"] \\
    & & \tilde{X}
\end{tikzcd}
\]
If $\phi\in\operatorname{Hom}(\operatorname{GLR}(K), X)$ satisfies $\psi=\pi\circ\phi\in\operatorname{Hom}(\operatorname{GLR}(K), X)$, then $\phi$ is called a \textit{lift} of $\psi$. We denote the set of all lifts of $\psi$ by $\operatorname{Lift}(\psi)$. Then we obtain the following result.

\begin{theorem}\label{theorem3.11}
Let $K$ be a Legendrian knot and $(X,\ast,u,d)$ a finite block GL-rack, we use $c$ to denote the size of each block. If $\phi \in \operatorname{Hom}(\operatorname{GLR}(K), X)$, then $\psi=\pi\circ\phi\in\operatorname{Hom}(\operatorname{GLR}(K), \widetilde{X})$. Furthermore, we have
    \[
    \operatorname{Col}_X(K) = \sum_{\psi \in \operatorname{Hom}(\operatorname{GLR}(K), \widetilde{X})} \big|\operatorname{Lift}(\psi)\big|,
    \]
    and $\big|\operatorname{Lift}(\psi)\big|$ can only be either $0$ or $c$.
\end{theorem}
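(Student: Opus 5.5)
The plan is to show three things in turn: that $\pi$ is a GL-rack homomorphism, that lifts of a fixed $\psi$ are determined by a single value, and that the cyclic group generated by $\Delta$ acts freely on $\operatorname{Lift}(\psi)$.

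\textbf{Step 1: $\pi$ is a homomorphism.} By Proposition \ref{proposition3.3}, $x\ast y\in A_i\ast A_j$ whenever $x\in A_i$ and $y\in A_j$. Hence $\pi(x\ast y)=\pi(x)\,\tilde{\ast}\,\pi(y)$. Likewise $\pi(u(x))=\tilde u(\pi(x))$ and $\pi(d(x))=\tilde d(\pi(x))$ follow from the well-definedness of $\tilde u,\tilde d$ in the proof of Theorem \ref{theorem3.5}. So $\pi\colon X\to\widetilde X$ is a GL-rack homomorphism onto the GL-quandle of Theorem \ref{theorem3.5}. Therefore $\psi=\pi\circ\phi\in\operatorname{Hom}(\operatorname{GLR}(K),\widetilde X)$ whenever $\phi\in\operatorname{Hom}(\operatorname{GLR}(K),X)$. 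Partitioning $\operatorname{Hom}(\operatorname{GLR}(K),X)$ according to the value of $\pi\circ\phi$ then gives the displayed sum formula immediately.

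\textbf{Step 2: upper bound $|\operatorname{Lift}(\psi)|\le c$.} As in the proof of Theorem \ref{theorem3.10}, present $\operatorname{GLR}(K)$ by arcs $x_1,\dots,x_N$ with relations $u^{p_i}d^{q_i}(x_i)\ast^{\varepsilon_i}x_{k_i}=x_{i+1}$. By Proposition \ref{proposition3.3}(1), $z\ast y$ depends only on the block containing $y$, and the same holds for $\ast^{-1}$, since it is the inverse of the bijection $\ast y$. So for a lift $\phi$ of $\psi$ the value $\phi(x_{i+1})$ is determined by $\phi(x_i)$ together with the block $\psi(x_{k_i})$. Inductively, $\phi$ is determined by $\phi(x_1)$, which must lie in the block $\pi^{-1}(\psi(x_1))$ of size $c$. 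Thus $|\operatorname{Lift}(\psi)|\le c$.

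\textbf{Step 3: free action of $\langle\Delta\rangle$ on lifts.} The map $\Delta$ is a rack automorphism by \cite[Proposition 1.5]{Szy2018}. It commutes with $u$ by the computation in the proof of Theorem \ref{theorem3.5}, and with $d$ by the identical computation. So for any lift $\phi$, the composite $\Delta\circ\phi$ is again a GL-rack homomorphism. Since $\Delta(A_i)=A_i$, we have $\pi\circ\Delta=\pi$, so $\Delta\circ\phi\in\operatorname{Lift}(\psi)$. This gives an action of the cyclic group $\langle\Delta\rangle$, of order $c$, on $\operatorname{Lift}(\psi)$. The action is free: each $A_i$ is a single $c$-cycle of $\Delta$, so $\Delta^k(\phi(x_1))\neq\phi(x_1)$ for $0<k<c$. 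Consequently every nonempty orbit has exactly $c$ elements. Combined with the bound of Step 2, $\operatorname{Lift}(\psi)$ is either empty or a single orbit, so $|\operatorname{Lift}(\psi)|\in\{0,c\}$.

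The main point needing care is Step 2, specifically that $\ast^{-1}$ also depends only on the block of its right argument. This follows because $\ast y=\ast y'$ as maps for $y,y'$ in the same block, so their inverses coincide.
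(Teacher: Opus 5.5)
Your proof is correct and follows essentially the same route as the paper: a lift of a fixed $\psi$ is determined by $\phi(x_1)$, which bounds $|\operatorname{Lift}(\psi)|$ by $c$, and once one lift $\phi$ exists the maps $\Delta^k\circ\phi$ give $c$ distinct lifts. Your version is somewhat more careful than the paper's in three places: you check that $\Delta$ commutes with $u$ and $d$ rather than only noting that it is a rack automorphism, you treat $\ast^{-1}$ explicitly, and you avoid the paper's circular remark that the last relation $r_n$ holds automatically.
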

\begin{proof}
    It is clear that $\psi = \pi\circ\phi \in \operatorname{Hom}(\operatorname{GLR}(K), \widetilde{X})$. Hence, every $\phi$ is a lift of some $\psi$, so
    \[
    \operatorname{Col}_X(K) = \sum_{\psi \in \operatorname{Hom}(\operatorname{GLR}(K), \widetilde{X})} \big|\operatorname{Lift}(\psi)\big|.
    \]
    
    Choose an element $\psi\colon\operatorname{GLR}(K)\to\widetilde{X}$ with $|\operatorname{Lift}(\psi)|\neq0$, in order to complete the proof, it is sufficient to show that in this case we have $|\operatorname{Lift}(\psi)|=c$. To this end, suppose that the fundamental GL-rack of $K$ is given by
    \[
    \operatorname{GLR}(K) = \langle x_1,\dots,x_n \mid r_1,\dots,r_n \rangle.
    \]
    Here, the relation $r_i$ has the following form
    \[
    u^{p_i}d^{q_i}(x_i) \ast^{\varepsilon_i} x_{k_i} = x_{i+1},
    \]
    here $x_{n+1} = x_1$. We suppose that $\psi(x_i) = a_{j_i} = \pi(A_{j_i})$. If $\phi$ is a lift of $\psi$, then we must have $\phi(x_i) \in A_{j_i}$. For a fixed $\phi(x_1)$, since $\phi(x_{k_1}) \in A_{j_{k_1}}$, we have
    \[
    u^{p_1}d^{q_1}(\phi(x_1)) \ast ^{\varepsilon_1} \phi(x_{k_1}) = u^{p_1}d^{q_1}(\phi(x_1)) \ast ^{\varepsilon_1} A_{j_{k_1}}.
    \]
    Thus, $\phi(x_2)$ is completely determined by $\phi(x_1)$ and $\psi$, since $A_{j_{k_1}}$ is determined by $\psi(x_{k_1})$. Similarly, all $\phi(x_i)$ $(1\leq i\leq n)$ are determined, and the relations $r_1,\dots,r_{n-1}$ are satisfied. For the last relation $r_n$, since 
    \begin{center}
    $\tilde{u}^{p_n}\tilde{d}^{q_n}(\psi(x_n))\tilde\ast^{\varepsilon_n}\psi(x_{k_n})=\psi(x_1)$,
    \end{center}
    and $\phi$ is a lift of $\psi$, we have
    \begin{center}
    $u^{p_n}d^{q_n}(\phi(x_n))\ast^{\varepsilon_n}\phi(x_{k_n})=\phi(x_1)$,
    \end{center}  
    which means that $r_n$ also holds. Therefore, in order to calculate $|\operatorname{Lift}(\psi)|$, it suffices to count the number of $\phi(x_1)$ for a fixed $\psi$. For a fixed lift $\phi$, consider $\Delta^k (\phi(x_1))$ $(1 \leq k \leq c)$. Note that for distinct $k$ and $k'$, $\Delta^k(\phi(x_1))\neq\Delta^{k'}(\phi(x_1))$. Since $\phi$ is a coloring, together with the fact that $\Delta$ is a rack automorphism, one concludes that $\Delta^k\phi$ is also a coloring and a lift of $\psi$. There are exactly $c$ choices for $\phi(x_1)$, we conclude that $|\operatorname{Lift}(\psi)|=c$.
\end{proof}

\begin{corollary}
Let $(X,\ast,u,d)$ be a finite block GL-rack and $c$ the size of each block of it, then for any Legendrian knot $K$, the coloring invariant Col$_X(K)$ is divided by $c$.
\end{corollary}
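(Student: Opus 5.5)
This follows directly from Theorem \ref{theorem3.11}. Let $(X,\ast,u,d)$ be a finite block GL-rack whose blocks all have size $c$, and let $(\widetilde{X},\tilde{\ast},\tilde{u},\tilde{d})$ be the GL-quandle from Theorem \ref{theorem3.5}. For any Legendrian knot $K$, Theorem \ref{theorem3.11} sorts the colorings $\phi\in\operatorname{Hom}(\operatorname{GLR}(K),X)$ by their projections $\psi=\pi\circ\phi$, which gives
\[
\operatorname{Col}_X(K)=\sum_{\psi\in\operatorname{Hom}(\operatorname{GLR}(K),\widetilde{X})}\big|\operatorname{Lift}(\psi)\big|.
\]
The sum is finite because $\widetilde{X}$ is finite and $\operatorname{GLR}(K)$ is finitely generated by the arcs of a front diagram.

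The same theorem says each term $|\operatorname{Lift}(\psi)|$ is either $0$ or $c$. Write $N$ for the number of $\psi$ with $\operatorname{Lift}(\psi)\neq\emptyset$. Then $\operatorname{Col}_X(K)=cN$, so $c$ divides $\operatorname{Col}_X(K)$.

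The only substantive step is the dichotomy $|\operatorname{Lift}(\psi)|\in\{0,c\}$, and it is already proved in Theorem \ref{theorem3.11}. The reason behind it is as follows. A lift is determined by the value $\phi(x_1)$, which must lie in the block $A_{j_1}$. Composing a lift with the powers $\Delta^k$, $1\le k\le c$, gives $c$ distinct lifts, because $\Delta$ acts on $A_{j_1}$ as a $c$-cycle and each $\Delta^k$ is a rack automorphism commuting with $u$ and $d$. Since $|A_{j_1}|=c$, these are all of the lifts. So the corollary needs no new argument beyond this bookkeeping.
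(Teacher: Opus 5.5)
Your proof is correct and matches the paper, which gives no separate proof of this corollary and treats it as an immediate consequence of Theorem \ref{theorem3.11}: $\operatorname{Col}_X(K)$ is a sum of terms $|\operatorname{Lift}(\psi)|\in\{0,c\}$ over $\psi\in\operatorname{Hom}(\operatorname{GLR}(K),\widetilde{X})$. Your sketch of that dichotomy also follows the paper's proof of Theorem \ref{theorem3.11}: a lift is determined by $\phi(x_1)\in A_{j_1}$, and the $c$ distinct lifts $\Delta^k\circ\phi$ use up all $c$ possible values of $\phi(x_1)$.
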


\begin{example}
    Consider coloring invariant of the following Legendrian trefoil knot.
    \begin{figure}[h]
        \centering
        \tikzset{every picture/.style={line width=0.75pt}} 
        \tikzset{every picture/.style={line width=0.75pt}} 
        \begin{tikzpicture}[x=0.75pt,y=0.75pt,yscale=-1,xscale=1]

\draw    (144.6,100.4) .. controls (184.6,70.4) and (158.6,142) .. (198.6,112) ;
\draw    (174.2,100.8) .. controls (178.57,97.52) and (182.26,95.55) .. (185.36,94.73) .. controls (191.04,93.22) and (194.73,95.57) .. (197,100.8) .. controls (200.5,108.89) and (207.8,116.8) .. (217,116.8) ;
\draw    (122.6,86.8) .. controls (152.2,85.6) and (126.87,137.2) .. (166.87,107.2) ;
\draw    (136.6,104.8) .. controls (133.4,107.6) and (127.4,120.4) .. (119.8,120.4) ;
\draw    (119.8,120.4) .. controls (141,121.6) and (146.2,143.2) .. (169.8,143.2) .. controls (193.4,143.2) and (199,115.6) .. (217,116.8) ;
\draw    (122.6,86.8) .. controls (143.88,85.16) and (149.8,63.6) .. (168.6,64) .. controls (187.4,64.4) and (186.2,80.4) .. (216.2,81.6) ;
\draw    (204.6,106) .. controls (211,95.2) and (205.4,81.6) .. (216.2,81.6) ;
\draw   (161.99,61.88) .. controls (164.19,63.13) and (166.35,63.82) .. (168.46,63.93) .. controls (166.4,64.4) and (164.39,65.45) .. (162.43,67.06) ;
\draw   (162.79,140.19) .. controls (164.77,141.77) and (166.8,142.78) .. (168.86,143.22) .. controls (166.76,143.36) and (164.61,144.08) .. (162.43,145.37) ;

\draw (151.07,115.6) node [anchor=north west][inner sep=0.75pt]   [align=left] {$\displaystyle x_{1}$};
\draw (175.27,106.8) node [anchor=north west][inner sep=0.75pt]   [align=left] {$\displaystyle x_{2}$};
\draw (112.47,100.6) node [anchor=north west][inner sep=0.75pt]   [align=left] {$\displaystyle x_{3}$};
\end{tikzpicture}
        \caption{Legendrian trefoil K}
        \label{fig4}
    \end{figure}
    \begin{enumerate}
        \item For the GL-rack given in example \ref{example3.9}, it is straightforward to verify that
        \[
        \operatorname{Col}_X(K) = \operatorname{Col}_{B_1}(K) + \operatorname{Col}_{B_2}(K) = 2 +  0 = 2.
        \]
        \item For the GL-rack given in example \ref{example3.2}, let us denote $A_1=\{1, 2\}$, $A_2=\{3, 5\}$, $A_3=\{4, 6\}$. Then we can define a GL-quandle $\widetilde{X} = \{a_1, a_2, a_3\}$ with $\tilde{u}=\tilde{d}=\text{id}_{\tilde{X}}$ and the operation table for \(x\tilde{\ast}y\) is given as follows:
        \[
        \begin{array}{c|ccc}
        \tilde{\ast} & a_1 & a_2 & a_3 \\
        \hline
        a_1 & a_1 & a_1 & a_1 \\
        a_2 & a_3 & a_2 & a_2 \\
        a_3 & a_2 & a_3 & a_3 \\
        \end{array}
        \]
        The projection $\pi:X \to \tilde{X}$ maps $A_i$ to $a_i$. We investigate colorings of the Legendrian trefoil knot $K$ associated with $X$ and its projection $\widetilde{X}$. It is readily seen that $\operatorname{Col}_{\widetilde{X}}(K) = 3$. Furthermore, for $\psi_j \in \operatorname{Hom}(\operatorname{GLR}(K), \widetilde{X})$ with $j = 1,2,3$, we have $\psi_j(x_i) = a_j$. Next, consider $\phi_1 \in \operatorname{Hom}(\operatorname{GLR}(K), X)$ such that $\phi_1(x_i) \in A_1$. Fix $\phi_1(x_1) = 1$, then
        \[
        \phi_1(x_2) = u d(1) \ast A_1 = 2 \ast A_1 = 1,
        \]
        and
        \[
        \phi_1(x_3) = \phi_1(x_2) \ast \phi_1(x_1) = 1 \ast 1 = 2.
        \]
        However, $\phi_1(x_1), \phi_1(x_2), \phi_1(x_3)$ need to satisfy the relation
        \[
        u d(\phi_1(x_3)) \ast \phi_1(x_2) = \phi(x_1),
        \]
        which becomes
        \[
        1 \ast 1 = 1,
        \]
        a contradiction. Hence $\operatorname{Lift}(\psi_1) = 0$. Similarly, we find that $\operatorname{Lift}(\psi_2)=\operatorname{Lift}(\psi_3)=0$, which follows that $\operatorname{Col}_X(K)=0$.
    \end{enumerate}
\end{example}

\begin{remark}
The operation $\ast$ restricts to a well-defined operation on each $A_i$, and $(A_i, \ast|_{A_i})$ is a permutation rack. Nevertheless, it is generally incompatible with $u$ and $d$, which implies that an individual $A_i$ does not carry a GL-rack structure in general. In particular, the coloring number with respect to a finite block GL-rack $X=\bigsqcup_{i=1}^nA_i$ cannot be expressed as the sum of coloring numbers of its permutation racks $A_i$.
\end{remark}

\section{Coloring invariants of Legendrian knots with the same classical invariants}\label{section4}
In this section, we prove that if two Legendrian knots have the same classical invariants, that is, they are of the same knot type and have equal Thurston-Bennequin number and rotation number, then their coloring invariants with respect to any given finite GL-rack coincide. The key to the proof is Theorem \ref{theorem3.10} in Section 3: every finite GL-rack decomposes into permutation GL-racks and block GL-racks. Thus it suffices to show that the two Legendrian knots have equal coloring invariant with respect to these two types of GL-racks.

First, we state a result due to Kimura, who proved that two Legendrian knots with identical classical invariants share the same coloring invariants with respect to any given finite GL-quandle. The key to his proof relies on Theorem \ref{theorem2.1}: two Legendrian knots $K_1$ and $K_2$ have identical classical invariants if and only if there exists a positive integer $N$ such that $K_1$ and $K_2$ become Legendrian isotopic after $N$ times positive stabilization and $N$ times negative stabilization. In other words, $S_+^N S_-^N(K_1)$ and $S_+^N S_-^N(K_2)$ are Legendrian isotopic. This theorem will also be used in the proofs of our subsequent results.

\begin{theorem}\label{theorem4.1}\cite{Bi-Legendrian-2023}
Let $K_1$ and $K_2$ be two Legendrian knots of the same knot type. If $tb(K_1)=tb(K_2)$ and $rot(K_1)=rot(K_2)$, then for any finite GL-quandle $(X, \ast, u, d)$, we have $\operatorname{Col}_X(K_1)=\operatorname{Col}_X(K_2)$.
\end{theorem}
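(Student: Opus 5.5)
The plan is to reduce the statement to a local computation at stabilizations, using Theorem \ref{theorem2.1}. As recalled before the statement, $K_1$ and $K_2$ have the same knot type, $tb$ and $rot$ exactly when there is an $N\geq 0$ with $S_+^NS_-^N(K_1)$ Legendrian isotopic to $S_+^NS_-^N(K_2)$. Since $\operatorname{Col}_X$ is a Legendrian isotopy invariant, it therefore suffices to prove
\[
\operatorname{Col}_X\bigl(S_+S_-(K)\bigr)=\operatorname{Col}_X(K)
\]
for every Legendrian knot $K$ and every finite GL-quandle $X$. Iterating this $N$ times on each side then gives
\[
\operatorname{Col}_X(K_1)=\operatorname{Col}_X\bigl(S_+^NS_-^N(K_1)\bigr)=\operatorname{Col}_X\bigl(S_+^NS_-^N(K_2)\bigr)=\operatorname{Col}_X(K_2).
\]
Since a stabilization may be performed on any small strand of a front without changing the Legendrian isotopy class, I will perform $S_+$ and then $S_-$ on one short arc segment of a front $D$ of $K$, away from all crossings and cusps.

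The key step is to compare colorings of $D$ with colorings of the new front $D'$ that agree with $D$ outside this local picture. Inside it, $D'$ has two zigzags. Each zigzag consists of two cusps and one self-crossing of the strand. Suppose the incoming semi-arc is colored $x$. By the rules of Figure \ref{fig2}, the colors of all semi-arcs inside the zigzags are forced, step by step along the orientation, so each zigzag determines its outgoing color from $x$.

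At a self-crossing the under-strand is acted on by $\ast^{\pm1}$ by the color of the over-strand. In a zigzag, the over-strand's color differs from the under-strand's color only by powers of $u$ and $d$. I will then use
\begin{itemize}
\item axiom (3), $z\ast u(y)=z\ast d(y)=z\ast y$;
\item axiom (2), $u(z\ast y)=u(z)\ast y$ and $d(z\ast y)=d(z)\ast y$;
\item the quandle identity $z\ast z=z$.
\end{itemize}
Together these show that each crossing acts trivially on the relevant color. Hence the outgoing color of a zigzag is just a word in $u,d$ applied to $x$.

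Since $rot$ changes by $\pm1$, i.e.\ $d(D)-u(D)$ changes by $\pm2$, the cusps should contribute $u^2$ for one of $S_\pm$ and $d^2$ for the other, up to labeling. Their composite is therefore $u^2d^2=\mathrm{id}$, using that in a GL-quandle $ud=du=\mathrm{id}$. So the strand leaves the double zigzag with color $x$, which is exactly the color constraint for the unmodified arc in $D$.

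This gives a bijection between colorings of $D$ and colorings of $D'$: restrict a coloring of $D'$ to the outside, or extend a coloring of $D$ uniquely through the local picture. Hence $\operatorname{Col}_X(D')=\operatorname{Col}_X(D)$.

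I expect the main obstacle to be the bookkeeping of the local picture. One has to get the cusp types, the orientations and the crossing signs of Figure \ref{fig1} right, and confirm which strand is over at each kink, so that the crossing really acts trivially. If a single stabilization turns out not to give exactly $u^2$ or $d^2$, the fallback is to compute the total local word for $S_+S_-$ directly and reduce it to the identity using $ud=du=\mathrm{id}$ and axioms (2)–(3).
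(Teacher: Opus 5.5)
Your proposal is correct and is essentially the argument the paper attributes to Kimura: use Theorem \ref{theorem2.1} to reduce to comparing $K$ with $S_+^NS_-^N(K)$, where the only change in the presentation is $u^{p_1}d^{q_1}\mapsto u^{2N+p_1}d^{2N+q_1}$, and this is harmless because $ud=du=\mathrm{id}$ in a GL-quandle. One correction: in the front projection a stabilization zigzag has two cusps and \emph{no} crossing (a crossing would change the writhe, so $tb$ would not drop by exactly $1$), so your step showing that the self-crossing acts trivially is unnecessary, and the local word is simply $u^2d^2=\mathrm{id}$.
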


We first prove that two Legendrian knots with identical classical invariants possess the same coloring invariants with respect to a given finite permutation GL-rack. The key step of the proof relies essentially on the fact that the rack operation $x\ast y=x\ast x=\Delta(x)$ is independent of $y$.

\begin{theorem}\label{theorem4.2}
Let $K_1$ and $K_2$ be two Legendrian knots of the same knot type. If $tb(K_1) = tb(K_2) = t$ and $rot(K_1) = rot(K_2) = r$, then for any finite permutation GL-rack $(X, \ast, u, d)$, we have $\operatorname{Col}_X(K_1) = \operatorname{Col}_X(K_2)$.
\end{theorem}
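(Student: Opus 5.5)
For a permutation GL-rack the operation is $x\ast y=\sigma(x)$ with $\sigma=\Delta$ and $ud=du=\sigma^{-1}$. I would compute $\operatorname{Col}_X(K)$ directly from a front diagram $D$ of $K$ and show it depends only on $w(D)$, $u(D)$ and $d(D)$. By the formulas for $tb$ and $rot$, these combine into quantities fixed by $t$ and $r$, which gives the result.

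First I would walk once around the oriented knot, starting at some semi-arc colored $x_1$. Each undercrossing changes the color by $\sigma^{\varepsilon}$, where $\varepsilon$ is the sign of the crossing. Indeed $x\ast^{-1}y=\sigma^{-1}(x)$, and which of $\ast$ or $\ast^{-1}$ is used is governed by the orientation of the over-strand, i.e.\ by the crossing sign. The color of the over-arc never enters, because the operation is independent of $y$. Each upward cusp changes the color by $u$ and each downward cusp by $d$, or by inverses, according to the rules of Figure~\ref{fig2}. The important point is that the cusp rule depends only on whether the cusp is traversed as an up or down cusp in the orientation, and I would check the four cusp pictures carefully.

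Since $u$, $d$ and $\sigma$ commute, going once around returns $x_1$ transformed by the single permutation
\[
\Phi_D=\sigma^{w(D)}\,u^{a}\,d^{b},
\]
where $a$ and $b$ are determined by $u(D)$ and $d(D)$. For instance, if each up cusp contributes $u$ and each down cusp contributes $d$, then $a=u(D)$ and $b=d(D)$. Here $w(D)$ counts crossings with sign, and each crossing is passed under exactly once.

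Every other semi-arc color is then determined by $x_1$. The only constraint is $\Phi_D(x_1)=x_1$, and there are no constraints coming from over-arcs. Hence $\operatorname{Col}_X(K)=|\operatorname{Fix}(\Phi_D)|$.

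Next I would rewrite $\Phi_D$ using $ud=\sigma^{-1}$. Write $u(D)=U$ and $d(D)=V$. Then $u^{U}d^{V}=(ud)^{(U+V)/2}\,(u^{-1}d)^{(V-U)/2}$, where the exponents are integers because $U+V$ is even, cusps coming in pairs. This gives
\[
\Phi_D=\sigma^{w(D)-\frac{1}{2}(U+V)}\,(u^{-1}d)^{\frac{1}{2}(V-U)}=\sigma^{tb(K)}(u^{-1}d)^{rot(K)}.
\]
(If the actual cusp rule gives a different combination, say $u^{U}d^{-V}$, the same manipulation still produces a word in $\sigma^{tb}$ and some fixed element raised to $rot$.) Therefore $\Phi_{D_1}=\Phi_{D_2}$ when $t$ and $r$ agree, and the coloring counts coincide. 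Note that this argument does not even need the two knots to have the same knot type.

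The main obstacle is getting the signs right: matching the cusp coloring rules of Figure~\ref{fig2} with the up/down cusp counts, and matching the crossing sign with $\ast$ versus $\ast^{-1}$, so that the exponents come out as exactly $w(D)$ and the cusp counts. If that bookkeeping becomes messy, I would instead use the stabilization argument quoted before Theorem~\ref{theorem4.1}. It suffices to show $\operatorname{Col}_X(K)$ is determined by $\operatorname{Col}_X(S_+^NS_-^N(K))$ together with $t$ and $r$. Locally, a stabilization inserts a kink consisting of one crossing and two cusps into $\Phi$, and I would compute that local contribution directly.
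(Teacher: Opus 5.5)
Your proposal is correct and follows essentially the same route as the paper: walk once around the knot to get $\operatorname{Col}_X(K)=|\operatorname{Fix}(u^{p}d^{q}\sigma^{w})|$, then use $ud=du=\sigma^{-1}$ and commutativity to rewrite this permutation as $\sigma^{t}(u^{-1}d)^{r}=u^{-r}d^{r}\sigma^{t}$. The cusp rules in Figure~\ref{fig2} give exactly $u$ per up cusp and $d$ per down cusp, so your hedge and the stabilization fallback are unnecessary; your remark that the same-knot-type hypothesis is never used is also accurate for the paper's argument.
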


\begin{proof}
    Suppose that the fundamental GL-racks of $K_1$ and $K_2$ are given by
    \begin{center}
        $\operatorname{GLR}(K_1) = \langle x_1,\dots,x_n \mid r_1,\dots,r_n \rangle$
    \end{center}
    and
    \begin{center}
        $\operatorname{GLR}(K_2) = \langle y_1,\dots,y_m \mid \tilde{r}_1,\dots,\tilde{r}_m \rangle,$
    \end{center}
    respectively, with the corresponding generators and relations.The above notation will be retained throughout the sequel without further ambiguity.
    
    For a finite permutation GL-rack $(X, \ast, u, d)$, any coloring $f$ of the Legendrian knot $K_1$ is uniquely determined by its value on a single arc. That is, if $f \colon \operatorname{GLR}(K_1) \to X$ is a coloring, then $f(x_1)$ uniquely determines all $f(x_i)$ for $i = 1, 2, \dots, n$. The same holds for the Legendrian knot $K_2$. Due to the relations $r_1,\dots,r_n$ in $GLR(K_1)$, for $f$ being a homomorphism, it satisfies
    \begin{center}
        $f(x_1) = u^p d^q \sigma^\omega f(x_1).$
    \end{center}
    Correspondingly, for $K_2$, if $g:GLR(K_2) \to X$ is a coloring, we have
    \begin{center}
        $g(y_1) = u^{\tilde{p}} d^{\tilde{q}} \sigma^{\tilde{\omega}} g(y_1).$
    \end{center}
    Here $p$, $q$, and $\omega$ (resp.\ $\tilde{p}$, $\tilde{q}$, and $\tilde{\omega}$) denote the number of up cusps, down cusps, and the writhe of the Legendrian knot $K_1$ (resp.\ $K_2$). Since
    \begin{center}
        $t = \omega - \frac{1}{2}(p+q) = \tilde{\omega} - \frac{1}{2}(\tilde{p}+\tilde{q}), \qquad r = \frac{1}{2}(q-p) = \frac{1}{2}(\tilde{q}-\tilde{p}).$
    \end{center}
    and $ud=\sigma^{-1}$, together with the fact that $ud=du$, we obtain
    \begin{center}
        $f(x_1) = u^{-\frac{1}{2}(q-p)} d^{\frac{1}{2}(q-p)} u^{\frac{1}{2}(p+q)} d^{\frac{1}{2}(p+q)} \sigma^\omega f(x_1) = u^{-r} d^{r} \sigma^{t} f(x_1).$
    \end{center}
    Similarly,
    \begin{center}
        $g(y_1) = u^{-\frac{1}{2}(\tilde{q}-\tilde{p})} d^{\frac{1}{2}(\tilde{q}-\tilde{p})} u^{\frac{1}{2}(\tilde{p}+\tilde{q})} d^{\frac{1}{2}(\tilde{p}+\tilde{q})} \sigma^{\tilde{\omega}} g(y_1) = u^{-r} d^{r} \sigma^{t} g(y_1).$
    \end{center}
    Thus, the coloring $f$ (resp. $g$) depends only on $t$ and $r$. Since
    \begin{center}
        $tb(K_1) = tb(K_2) = t$ and $rot(K_1) = rot(K_2) = r$,
    \end{center}
    the numbers of colorings of $K_1$ and $K_2$ are equal.
\end{proof}

Next, we prove that two Legendrian knots with identical classical invariants have the same coloring invariants with respect to a given finite block GL-rack. For convenience of the proof, we first state a lemma, which discusses the relation between the coloring invariant of a Legendrian knot $K$ and that of the Legendrian knot $S_+^NS_-^N(K)$, with respect to the given finite block GL-rack. Since the location of stabilization does not affect Legendrian knot type, we assume throughout that all the stabilizations are performed at the arc $x_1$. In other words, if the crossing between $x_1$ and $x_2$ in $K$ corresponds to the relation $u^{p_1}d^{q_1}(x_1)\ast^{\varepsilon_1} x_{k_1}=x_2$, then this crossing in $S_+^NS_-^N(K)$ corresponds to the relation $u^{2N+p_1}d^{2N+q_1}(x_1) \ast^{\varepsilon_1}x_{k_1}=x_2$.

\begin{lemma}\label{lemma4.3}
    Let $K$ be a Legendrian knot, and let $S_+^N S_-^N(K)$ be the Legendrian knot obtained from $K$ by $N$ positive stabilizations and $N$ negative stabilizations. Let $(X, \ast, u, d)$ be a finite block GL-rack, assume $\Delta = \alpha_1\cdots\alpha_l$ with support $A_i = \operatorname{supp}(\alpha_i)$ $(1\le i\le l)$. Let $(\widetilde{X}=\{a_1, \cdots, a_l\}, \tilde{\ast}, \tilde{u}, \tilde{d})$ be the induced GL-quandle of $(X, \ast, u, d)$. Then $\operatorname{Hom}(\operatorname{GLR}(K),\widetilde{X}) = \operatorname{Hom}(\operatorname{GLR}(S_+^N S_-^N(K)),\widetilde{X})$. Furthermore, for any $\psi\in\operatorname{Hom}(\operatorname{GLR}(K),\widetilde{X})$, there is a $\psi'\in\operatorname{Hom}\bigl(\operatorname{GLR}(S_+^N S_-^N(K)),\widetilde{X}\bigr)$ such that $\psi'(x_i)=\psi(x_i)$. Moreover, if $|\operatorname{Lift}(\psi)|\neq 0$, then $|\operatorname{Lift}(\psi')|\neq 0$ if and only if $\Delta^{2N}=\operatorname{id}_X$.
\end{lemma}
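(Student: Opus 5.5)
Throughout we use the presentation $\operatorname{GLR}(K)=\langle x_1,\dots,x_n\mid r_1,\dots,r_n\rangle$ with $r_i\colon u^{p_i}d^{q_i}(x_i)\ast^{\varepsilon_i}x_{k_i}=x_{i+1}$. After stabilizing at $x_1$, the presentation of $\operatorname{GLR}(S_+^NS_-^N(K))$ has the same generators and the same relations, except that $r_1$ becomes $r_1'\colon u^{2N+p_1}d^{2N+q_1}(x_1)\ast^{\varepsilon_1}x_{k_1}=x_2$. (The new semi-arcs inside the zigzags are determined by $x_1$ through the cusp rules, so they can be eliminated.) The whole argument therefore compares $r_1$ and $r_1'$, using $ud=du$ and $ud=\Delta^{-1}$. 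These identities give $u^{2N+p_1}d^{2N+q_1}=\Delta^{-2N}u^{p_1}d^{q_1}$.

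\emph{Step 1 (quandle level).} In the GL-quandle $\widetilde X$ of Theorem~\ref{theorem3.5} we have $\tilde u\tilde d=\mathrm{id}$, so $\tilde u^{2N+p_1}\tilde d^{2N+q_1}=\tilde u^{p_1}\tilde d^{q_1}$. Hence $r_1$ and $r_1'$ impose the same condition on maps into $\widetilde X$. A map of the generators $x_i$ into $\widetilde X$ is therefore a homomorphism for $K$ if and only if it is one for $S_+^NS_-^N(K)$. This gives the identification $\operatorname{Hom}(\operatorname{GLR}(K),\widetilde X)=\operatorname{Hom}(\operatorname{GLR}(S_+^NS_-^N(K)),\widetilde X)$, and $\psi'$ is simply the same assignment, so $\psi'(x_i)=\psi(x_i)$.

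\emph{Step 2 (lifts).} Fix $\phi\in\operatorname{Lift}(\psi)$. By the proof of Theorem~\ref{theorem3.11}, every candidate lift of $\psi'$ is obtained by choosing $\phi'(x_1)\in A_{j_1}$ and propagating through $r_1',r_2,\dots,r_{n-1}$; the only relation left to check is $r_n$. By Proposition~\ref{proposition3.3}(1), $y\ast^{\varepsilon}z$ depends only on the block of $z$, and $\Delta$ commutes with $u$, $d$ and with $\ast y$. Set $\phi'(x_1)=\Delta^{s}\phi(x_1)$. Then $r_1'$ gives
\[
\phi'(x_2)=\Delta^{-2N}u^{p_1}d^{q_1}\Delta^{s}\phi(x_1)\ast^{\varepsilon_1}A_{j_{k_1}}=\Delta^{s-2N}\phi(x_2).
\]
Inductively, $\phi'(x_i)=\Delta^{s-2N}\phi(x_i)$ for $2\le i\le n$, using that $\Delta$ is an automorphism and that the right operand only matters through its block. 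Relation $r_n$ then forces $\Delta^{s-2N}\phi(x_1)=\Delta^{s}\phi(x_1)$, that is, $\Delta^{2N}\phi(x_1)=\phi(x_1)$. Every element of a block rack lies in a $\Delta$-cycle of length exactly $c$, so this holds if and only if $c\mid 2N$, i.e.\ $\Delta^{2N}=\mathrm{id}_X$. The condition is independent of $s$, so either all $c$ candidates are lifts or none is. This agrees with the dichotomy of Theorem~\ref{theorem3.11} and proves the ``if and only if''.

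The main obstacle is Step 2. One must justify carefully that the propagation commutes with $\Delta^{s-2N}$: the second argument $\phi'(x_{k_i})$ is itself shifted, and we need Proposition~\ref{proposition3.3}(1) to see that this shift is harmless. One must also check that all candidate values of $\phi'(x_1)$ are exhausted by $\Delta^{s}\phi(x_1)$. The latter holds because $\phi(x_1)$ and $\phi'(x_1)$ both lie in the single $\Delta$-cycle $A_{j_1}$. Finally, the elimination of the zigzag semi-arcs, which turns the stabilized presentation into one differing only in $r_1$, should be stated explicitly from the cusp coloring rules.
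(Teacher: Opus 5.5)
Your proposal is correct and follows essentially the same route as the paper. Both arguments compare a candidate lift $\phi'$ with a fixed lift $\phi$, use Proposition~\ref{proposition3.3} together with $u^{2N}d^{2N}=\Delta^{-2N}$ to show that $\phi'(x_i)$ is a uniform $\Delta$-shift of $\phi(x_i)$, and reduce the closing relation to $\Delta^{2N}\phi(x_1)=\phi(x_1)$. The only difference is how sufficiency is handled. By keeping the offset $\Delta^{s}$, you get both directions from one computation. The paper instead normalizes to $s=0$ for necessity and argues sufficiency separately, by noting that the two presentations impose identical relations on $X$ when $\Delta^{2N}=\mathrm{id}_X$. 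The paper phrases this loosely as $\operatorname{GLR}(K)\cong\operatorname{GLR}(S_+^NS_-^N(K))$, and your formulation avoids that overstatement.
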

\begin{proof}
Suppose that
\begin{center}
$\operatorname{GLR}(K) = \langle x_1,\dots,x_n \mid r_1,\dots,r_n \rangle, \quad \operatorname{GLR}(S_+^N S_-^N(K)) = \langle x_1,\dots,x_n \mid r_1',\dots,r_n' \rangle,$
\end{center}
where each relation $r_i$ is given by
\begin{center}
$u^{p_i} d^{q_i} (x_i) *^{\varepsilon_i} x_{k_i} = x_{i+1} \quad (1\leq i\leq n)$,
\end{center}
here $x_{n+1}=x_1$. Among the relations $r_i'$, only $r_1'$ is changed to
\begin{center}
$u^{2N+p_1} d^{2N+q_1}(x_1) \ast^{\varepsilon_1} x_{k_1} = x_2,$
    \end{center}
while any other $r_i'$ $(2\leq i\leq n)$ coincide with $r_i$. Since $(\widetilde{X}, \widetilde{*}, \widetilde{u}, \widetilde{d})$ is a GL-quandle, it follows naturally that $\widetilde{u}\widetilde{d}=\operatorname{id}_{\widetilde{X}}$, and hence $\operatorname{Hom}(GLR(K),\widetilde{X}) = \operatorname{Hom}(GLR(S_+^N S_-^N(K)),\widetilde{X})$. Furthermore, for any $\psi\in\operatorname{Hom}(\operatorname{GLR}(K),\widetilde{X})$, there is a $\psi'\in\operatorname{Hom}\bigl(\operatorname{GLR}(S_+^N S_-^N(K)),\widetilde{X}\bigr)$ such that $\psi'(x_i)=\psi(x_i)$.

If $|\operatorname{Lift}(\psi)|\neq 0$, then there exists $\phi\in\operatorname{Hom}(\operatorname{GLR}(K),X)$ such that $\phi\circ\pi=\psi$ and $\phi(x_i)$ $(1\leq i\leq n)$ satisfy the relations $r_1,\dots,r_n$. If $\operatorname{Lift}(\psi')\neq\emptyset$, then we choose a lift $\phi'\in\operatorname{Lift}(\psi')$. Now $\psi'(x_i)=\psi(x_i)$ implies that $\phi(x_i)$ and $\phi'(x_i)$ must both lie in $\pi^{-1}(\psi(x_i))$. Without loss of generality, let us assume $\phi'(x_1)=\phi(x_1)$. If not, we can use $\Delta^k\phi'$ to replace $\phi'$ for some suitable $k$. Now the relation $r_1'$ gives
\begin{center}
$\Delta^{-2N}(u^{p_1}d^{q_1}\phi'(x_1))\ast^{\varepsilon_1}\phi'(x_{k_1})=\phi'(x_2)$. 
\end{center}
Since
\begin{center}
$u^{p_1}d^{q_1}\phi(x_1)\ast^{\varepsilon_1}\phi(x_{k_1})=\phi(x_2)$
\end{center}
and $\phi'(x_{k_1})$ and $\phi(x_{k_1})$ lie in the same block $\pi^{-1}(\psi(x_{k_1}))$, together with Proposition \ref{proposition3.3}, we conclude that 
\begin{align*}
\phi'(x_2)&=\Delta^{-2N}(u^{p_1}d^{q_1}\phi'(x_1))\ast^{\varepsilon_1}\phi'(x_{k_1})\\
&=\Delta^{-2N}(u^{p_1}d^{q_1}\phi(x_1))\ast^{\varepsilon_1}\phi(x_{k_1})\\
&=\Delta^{-2N}(u^{p_1}d^{q_1}\phi(x_1))\ast^{\varepsilon_1}\Delta^{-2N}(\phi(x_{k_1}))\\
&=\Delta^{-2N}((u^{p_1}d^{q_1}\phi(x_1))\ast^{\varepsilon_1}\phi(x_{k_1}))\\
&=\Delta^{-2N}(\phi(x_2)).
\end{align*}
Similarly, one obtains that
\begin{center}
$\phi'(x_i)=\Delta^{-2N}(\phi(x_i)) \quad (2\leq i \leq n)$.
\end{center}
The relation $r_n'$ reads
\begin{center}
$\phi'(x_n) \ast^{\varepsilon_n} \phi'(x_{k_n})=\phi'(x_1),$
\end{center}
which now becomes
\begin{center}
$\Delta^{-2N}(\phi(x_n)\ast^{\varepsilon_n}\phi(x_{k_n}))=\phi(x_1).$
\end{center}
On the other hand, the relation $r_n$ yields
\begin{center}
$\phi(x_n) \ast^{\varepsilon_n} \phi(x_{k_n})=\phi(x_1).$
\end{center}
Hence we must have
\begin{center}
$\Delta^{-2N}(\phi(x_1))=\phi(x_1).$
\end{center}
Since $\Delta$ is a cyclic permutation on $\pi^{-1}(\psi(x_1))$, it follows that $\Delta^{2N}=\operatorname{id}_{\pi^{-1}(\psi(x_1))}=\operatorname{id}_X$.
    
Conversely, if $\Delta^{2N}=\operatorname{id}_X$, then we have $\operatorname{GLR}(K)\cong\operatorname{GLR}(S_+^N S_-^N(K))$ and hence
\begin{center}
$\operatorname{Hom}(\operatorname{GLR}(K),X) = \operatorname{Hom}(\operatorname{GLR}(S_+^N S_-^N(K)),X),$
\end{center}
so $|\operatorname{Lift}(\psi')|=|\operatorname{Lift}(\psi)|\neq 0$.
\end{proof}

Lemma \ref{lemma4.3} implies that if $\Delta^{2N} = \operatorname{id}_X$, then $|\operatorname{Lift}(\psi)| = |\operatorname{Lift}(\psi')|$; if $\Delta^{2N} \neq \operatorname{id}_X$, then at least one of $|\operatorname{Lift}(\psi)|$ and $|\operatorname{Lift}(\psi')|$ is zero. Based on this, we now prove that two Legendrian knots with identical classical invariants have the same coloring invariants with respect to a given finite block GL-rack.

\begin{theorem}\label{theorem4.4}
Let $K_1$ and $K_2$ be two Legendrian knots of the same knot type. If $tb(K_1)=tb(K_2)=t$ and $rot(K_1)=rot(K_2)=r$, then for any finite block GL-rack $(X, \ast, u, d)$, we have $\operatorname{Col}_X(K_1)=\operatorname{Col}_X(K_2)$.
\end{theorem}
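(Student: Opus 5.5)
By Theorem \ref{theorem2.1}, and since $K_1$ and $K_2$ share knot type, $tb$ and $rot$, there is a positive integer $N$ such that $S_+^NS_-^N(K_1)$ and $S_+^NS_-^N(K_2)$ are Legendrian isotopic. Coloring numbers are Legendrian isotopy invariants, so
\[
\operatorname{Col}_X\bigl(S_+^NS_-^N(K_1)\bigr)=\operatorname{Col}_X\bigl(S_+^NS_-^N(K_2)\bigr).
\]
The plan is to compare each $\operatorname{Col}_X(K_i)$ with $\operatorname{Col}_X(S_+^NS_-^N(K_i))$ using Theorem \ref{theorem3.11} together with Lemma \ref{lemma4.3}. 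A key point is that we may enlarge $N$ freely: if $N$ works, then so does every multiple of $N$, since we can apply further pairs of stabilizations to the two isotopic knots.

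Let $c$ be the common cycle length of $\Delta$, so $\Delta^c=\operatorname{id}_X$. Replace $N$ by $cN$, which ensures $\Delta^{2N}=\operatorname{id}_X$. In this case every relation $u^{2N+p_1}d^{2N+q_1}(x_1)\ast^{\varepsilon_1}x_{k_1}=x_2$ in $\operatorname{GLR}(S_+^NS_-^N(K))$ agrees, after evaluation in $X$, with the original relation $u^{p_1}d^{q_1}(x_1)\ast^{\varepsilon_1}x_{k_1}=x_2$. This holds because $(ud)^{2N}=\Delta^{-2N}=\operatorname{id}_X$ and $ud=du$.

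Hence colorings of $K$ and of $S_+^NS_-^N(K)$ by $X$ are in natural bijection. This is the converse direction of Lemma \ref{lemma4.3}, and it can also be checked directly on the presentations, since only the relation $r_1$ changes. It gives $\operatorname{Col}_X(K_i)=\operatorname{Col}_X(S_+^NS_-^N(K_i))$ for $i=1,2$. Chaining the three equalities yields $\operatorname{Col}_X(K_1)=\operatorname{Col}_X(K_2)$.

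The point I expect to need care is justifying that stabilization at a single arc changes only one relation, and by exactly the factor $(ud)^{2N}$. This uses the commutation $ud=du$ from \cite{GL-rack-Karmakar-Saraf-Singh}, together with the fact that each of $S_+$ and $S_-$ contributes one up cusp and one down cusp on the arc. The paper has already adopted this convention before Lemma \ref{lemma4.3}.

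If one prefers to stay closer to the lifting framework, the same conclusion follows without the direct evaluation argument. By Lemma \ref{lemma4.3}, $\psi\mapsto\psi'$ is a bijection between $\operatorname{Hom}(\operatorname{GLR}(K),\widetilde{X})$ and $\operatorname{Hom}(\operatorname{GLR}(S_+^NS_-^N(K)),\widetilde{X})$. Moreover $|\operatorname{Lift}(\psi)|=|\operatorname{Lift}(\psi')|$, because $\Delta^{2N}=\operatorname{id}_X$ and the possible values of the lift counts are $0$ or $c$ by Theorem \ref{theorem3.11}. Summing over $\psi$ using Theorem \ref{theorem3.11} again gives equal coloring numbers.

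Notably, the argument does not use Theorem \ref{theorem4.1} for $\widetilde{X}$, because the choice of $N$ divisible by $c$ makes the comparison exact.
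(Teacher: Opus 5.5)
Your argument is correct, but it is not the proof the paper gives for this theorem. It is the shortcut the authors only mention afterwards, in the second item of the remark following the proof of Theorem \ref{theorem1.1}.

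The paper works through the quotient GL-quandle $\widetilde{X}$. It writes $\operatorname{Col}_X(K_i)$ as a sum of lift counts via Theorem \ref{theorem3.11}, and uses Lemma \ref{lemma4.3} to control when $|\operatorname{Lift}(\psi)|\neq 0$ survives passing to $S_+^NS_-^N(K_i)$. It then splits into the cases $\Delta^{2N}=\operatorname{id}_X$ and $\Delta^{2N}\neq\operatorname{id}_X$, enlarging $N$ to $N+k$ with $c\mid 2N+2k$ in the second case. Finally it matches the $\psi_1$ and $\psi_2$ having nonzero lifts, using that lift counts are $0$ or $c$.

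You instead pick $N$ with $c\mid 2N$ from the start. Then $(ud)^{2N}=\Delta^{-2N}$ is the identity on $X$, and with $ud=du$ the one modified relation evaluates in $X$ to the original one. So the colorings of $K_i$ and of $S_+^NS_-^N(K_i)$ are literally the same assignments.

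What your route buys:
\begin{enumerate}
\item It is shorter.
\item It states exactly what is true. The paper's proof of Lemma \ref{lemma4.3} asserts, under $\Delta^{2N}=\operatorname{id}_X$, that $\operatorname{GLR}(K)\cong\operatorname{GLR}(S_+^NS_-^N(K))$. Since that hypothesis concerns only $X$, what actually follows is that the Hom-sets into $X$ coincide, which is what you prove.
\item It uses the block hypothesis only through the finiteness of the order of $\Delta$. Taking $N$ a multiple of that order, it proves Theorem \ref{theorem1.1} for every finite GL-rack without needing Theorem \ref{theorem3.10}.
\end{enumerate}
What the paper's route buys in exchange is structure: it shows how $X$-colorings fiber over $\widetilde{X}$-colorings, with fibers of size $0$ or $c$.

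Your closing lift-based paragraph is unnecessary. It would also need the reverse implication, which Lemma \ref{lemma4.3} does not state, though your direct argument supplies it anyway.

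One small correction: a single stabilization adds two cusps of the same type, not one of each. $S_+$ adds two down cusps and $S_-$ adds two up cusps, which is why $rot=\frac12(d(D)-u(D))$ changes by $\pm1$. Only the combination $S_+^NS_-^N$ contributes $2N$ cusps of each type. That is all you use, so your modified relation $u^{2N+p_1}d^{2N+q_1}(x_1)\ast^{\varepsilon_1}x_{k_1}=x_2$ is still right.
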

\begin{proof}
Suppose that the fundamental GL-racks of $K_1$ and $K_2$ are given by
\[
\operatorname{GLR}(K_1)=\langle x_1,\dots,x_n \mid r_1,\dots,r_n \rangle
\]
and
\[
\operatorname{GLR}(K_2)=\langle y_1,\dots,y_m \mid \tilde{r}_1,\dots,\tilde{r}_m \rangle,
\]
And suppose that the fundamental GL-racks of $S_+^NS_-^N(K_1)$ and $S_+^NS_-^N(K_2)$ are given by
\[
\operatorname{GLR}(S_+^NS_-^N(K_1))=\langle x_1, \dots, x_n \mid r_1', \cdots,r_n' \rangle
\]
and
\[
\operatorname{GLR}(S_+^NS_-^N(K_2))=\langle y_1, \dots, y_m \mid \tilde{r}_1', \cdots, \tilde{r}_m' \rangle.
\]
Here, the relations $r_i$ are of the form
\[
u^{p_i}d^{q_i}(x_i)\ast^{\varepsilon_i} x_{k_i} = x_{i+1},
\]
and the relations $\tilde{r}_i$ are of the form
\[
u^{\tilde{p}_i}d^{\tilde{q}_i}(y_i)\ast^{\tilde{\varepsilon}_i}y_{k_i}=y_{i+1},
\]
where $x_{n+1}=x_1$ and $y_{m+1}=y_1$. All relations $r_i'$ coincide with $r_i$ for $2\le i\le n$, except that $r_1'$ differs from $r_1$. Similarly, all relations $\tilde{r}_i'$ coincide with $\tilde{r}_i$ for $2\le i\le m$, except that $\tilde{r}_1'$ differs from $\tilde{r}_1$. Here all relations are expressed as the same as that in Lemma \ref{lemma4.3}, and all notations used are the same as that in Lemma \ref{lemma4.3}.

Since $K_1$ and $K_2$ are of the same knot type and have the same Thurston-Bennequin number and rotation number, Theorem \ref{theorem2.1} implies that there exists a positive integer $N$ such that $S_+^N S_-^N(K_1)$ and $S_+^N S_-^N(K_2)$ are Legendrian isotopic. Hence, there is an isomorphism $h \colon \operatorname{GLR}(S_{+}^N S_{-}^N(K_1)) \to \operatorname{GLR}(S_{+}^N S_{-}^N(K_2))$. Consider the following commutative diagram.
\[
\begin{tikzcd}
    & \operatorname{GLR}(S_+^N S_-^N(K_1)) \arrow[r,"\phi_1'"] \arrow[rd,"\psi_1'"] & X \arrow[d,"\pi"] & \operatorname{GLR}(S_+^N S_-^N(K_2)) \arrow[l,"\phi_2'"] \arrow[ld,"\psi_2'"] \\
    & & \tilde{X} & &
\end{tikzcd}
\]
For any $\psi_2' \in \operatorname{Hom}\bigl(\operatorname{GLR}(S_+^N S_-^N(K_2)), \widetilde{X}\bigr)$, there exists $\psi_1' = \psi_2' \circ h \in \operatorname{Hom}\bigl(\operatorname{GLR}(S_+^N S_-^N(K_1)), \widetilde{X}\bigr)$. If $\phi_2' \in \operatorname{Hom}\bigl(\operatorname{GLR}(S_+^NS_-^N(K_2)), X\bigr)$ is a lift of $\psi_2'$, then $\phi_1' = \phi_2' \circ h \in \operatorname{Hom}\bigl(\operatorname{GLR}(S_+^N S_-^N(K_1)), X\bigr)$ is a lift of $\psi_1'$. It follows that $|\operatorname{Lift}(\psi_1')|\geq|\operatorname{Lift}(\psi_2')|$. Similarly, we can also obtain $|\operatorname{Lift}(\psi_1')|\leq|\operatorname{Lift}(\psi_2')|$, which implies $|\operatorname{Lift}(\psi_1')| = |\operatorname{Lift}(\psi_2')|$. 

By Lemma \ref{lemma4.3}, for $\psi_1'$ and $\psi_2'$, there exist unique $\psi_1 \in \operatorname{Hom}\bigl(\operatorname{GLR}(K_1),\widetilde{X}\bigr)$ and $\psi_2 \in \operatorname{Hom}\bigl(\operatorname{GLR}(K_2),\widetilde{X}\bigr)$ such that $\psi_1(x_i) = \psi_1'(x_i)$ and $\psi_2(y_j) = \psi_2'(y_j)$ for $1\le i\le n$ and $1\le j\le m$. We next prove that if some $\psi_1 \in \operatorname{Hom}(\operatorname{GLR}(K_1),\widetilde{X})$ satisfies $|\operatorname{Lift}(\psi_1)| \neq 0$, then for the corresponding $\psi_1' \in \operatorname{Hom}\bigl(\operatorname{GLR}(S_+^N S_-^N(K_1)),\widetilde{X}\bigr)$ and $\psi_2' = \psi_1' \circ h^{-1} \in \operatorname{Hom}\bigl(\operatorname{GLR}(S_+^N S_-^N(K_2)),\widetilde{X}\bigr)$, the associated $\psi_2 \in \operatorname{Hom}(\operatorname{GLR}(K_2),\widetilde{X})$ must satisfy $|\operatorname{Lift}(\psi_2)| \neq 0$.

According to Lemma \ref{lemma4.3}, we proceed with the following discussion by considering the following two cases:
\begin{itemize}
\item if $\Delta^{2N}=\operatorname{id}_X$, then we have
\[
|\operatorname{Lift}(\psi_2)|=|\operatorname{Lift}(\psi_2')|=|\operatorname{Lift}(\psi_1')|=|\operatorname{Lift}(\psi_1)| \neq 0.
\]
\item If $\Delta^{2N}\neq\operatorname{id}_X$, then both $|\operatorname{Lift}(\psi_1')|$ and $|\operatorname{Lift}(\psi_2')|$ vanish. Nevertheless, there exists a positive integer $k$ such that $\Delta^{2N+2k} = \mathrm{id}_X$. For example, any positive integer $k$ satisfying $c|2N+2k$ meets the requirement, here $c$ denotes the size of each block. In this case, the lifts with respect to the coloring coincide for $S_+^{N+k} S_-^{N+k}(K_i)$ and $K_i$ ($i=1,2$). Moreover, $S_+^{N+k} S_-^{N+k}(K_1)$ and $S_+^{N+k} S_-^{N+k}(K_2)$ remain Legendrian isotopic. Therefore $|\mathrm{Lift}(\psi_2)| \neq 0$.
\end{itemize}
For every $\psi_1 \in \operatorname{Hom}(\operatorname{GLR}(K_1),\widetilde{X})$ with $|\operatorname{Lift}(\psi_1)| \neq 0$, we can find a $\psi_2 \in \operatorname{Hom}(\operatorname{GLR}(K_2),\widetilde{X})$ such that $|\operatorname{Lift}(\psi_2)| \neq 0$. Since the number of lifts takes only two values, $0$ or $|A_i|$, it follows that there exists a one-to-one correspondence between $\operatorname{Hom}(\operatorname{GLR}(K_1),\widetilde{X})$ and $\operatorname{Hom}(\operatorname{GLR}(K_2),\widetilde{X})$ such that $|\operatorname{Lift}(\psi_1)|\neq0$ if and only if the corresponding $|\operatorname{Lift}(\psi_2)|\neq0$. Therefore,
\[
\operatorname{Col}_X(K_1) = \sum_{\psi_1 \in \operatorname{Hom}(\operatorname{GLR}(K_1),\widetilde{X})} |\operatorname{Lift}(\psi_1)| = \sum_{\psi_2 \in \operatorname{Hom}(\operatorname{GLR}(K_2),\widetilde{X})} |\operatorname{Lift}(\psi_2)| = \operatorname{Col}_X(K_2).
\]
The proof is finished.
\end{proof}

By combining Theorem \ref{theorem3.10}, Theorem \ref{theorem4.2}, and Theorem \ref{theorem4.4}, we complete the proof of Theorem \ref{theorem1.1}.

\begin{remark}
Here we list some remarks about the results above.
\begin{enumerate}
\item In \cite{Bi-Legendrian-2023}, Kimura proved that two Legendrian knots with identical classical invariants have the same coloring invariant with respect to any given finite GL-quandle. Theorem \ref{theorem1.1} extends this result from finite GL-quandles to finite GL-racks.
\item We would like to remark that, similar to Theorem \ref{theorem4.1}, it is possible to prove Theorem \ref{theorem1.1} directly by using the result of Theorem \ref{theorem2.1}. This is due to the fact that $\Delta$ is of finite order, and it suffices to take $N$ sufficiently large such that $\Delta^{2N}=\operatorname{id}$. However, such an argument ignores the decomposition structure of the GL-rack $X$.
\item By distinguishing left cusp and right cusp, the notion of GL-rack can be generalized to 4-Legendrian rack, which was first studied by Naoki Kimura in his PhD Thesis. Very recently, Ta and Wood proved an analogue of Theorem \ref{theorem4.2} for permutation 4-Legendrian racks \cite{TW2026}. Moreover, similar to the main result of \cite{Fundamental-GL-rack-and-classical-invariants}, which states that the fundamental GL-rack determines the classical invariants up to the sign, in \cite{TW2026} Ta and Wood proved that the fundamental 4-Legendrian rack completely determines the classical invariants.
\item For the fundamental GL-rack $\operatorname{GLR}(K)$ of a Legendrian knot $K$, now we know that it determines the Thurston-Bennequin number $tb(K)$ and rotation number $rot(K)$ in the sense of ignoring the signs \cite{Fundamental-GL-rack-and-classical-invariants}. On the other hand, if we are given two Legendrian knots with the same classical invariants, Theorem \ref{theorem1.1} tells us that for any finite GL-rack $X$, these two Legendrian knots have the same coloring invariant. First, it is natural to investigate the difference between $\operatorname{GLR}(K)$ and the set $\{\operatorname{Col}_K(X)\}$ for all finite GL-rack $X$. For knot groups, if for any finite group $G$ we have $|\operatorname{Hom}(\pi_1(S^3-K_1), G)|=|\operatorname{Hom}(\pi_1(S^3-K_2), G)|$, then the knot groups of $K_1$ and $K_2$ are profinitely equivalent, i.e. they share the same collection of finite quotient groups. It is a classical result \cite{DFPR1982} that in this case they have isomorphic profinite completions $\widehat{\pi_1}(S^3-K_1)\cong\widehat{\pi_1}(S^3-K_2)$. Therefore, it is an interesting problem to develop a theory of profinite completion for fundamental GL-racks analogous to that for knot groups. Second, since the isomorphism of the fundamental GL-racks of two Legendrian knots cannot rule out the case that both the Thurston–Bennequin number and the rotation number have opposite signs, it is necessary to investigate the coloring invariants of these two Legendrian knots under this scenario. This is precisely the topic we intend to discuss in the next chapter.
\end{enumerate}
\end{remark}

\section{Some further discussions}\label{section5}
In this section, we investigate the relationship between the coloring numbers of two Legendrian knots of the same topological knot type, under the condition that their Thurston–Bennequin numbers and rotation numbers are pairwise opposite.

\begin{theorem}
Let $K_1$ and $K_2$ be two Legendrian knots of the same knot type. If $tb(K_1)=t=-tb(K_2)$ and $rot(K_1)=r=-rot(K_2)$, then for any permutation GL-rack $(X, \ast, u, d)$, we have $\operatorname{Col}_X(K_1)=\operatorname{Col}_X(K_2)$.
\end{theorem}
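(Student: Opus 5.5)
The approach is to reuse the computation in the proof of Theorem \ref{theorem4.2}, which reduces the coloring count to a fixed-point count for one permutation of $X$. The sign reversal of $tb$ and $rot$ will then just replace that permutation by its inverse, and a permutation and its inverse have the same fixed points.

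First, I recall the setup. In a permutation GL-rack we have $x\ast y=\sigma(x)$ for all $x,y$ and $ud=\sigma^{-1}$. Since $X$ is finite and $ud$ is a bijection, $d$ is injective and hence bijective, so $u$ is bijective too. Also $ud=du$, as used in Theorem \ref{theorem4.2}, so $u$, $d$ and $\sigma$ generate an abelian group of permutations of $X$.

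Next, as in the proof of Theorem \ref{theorem4.2}, take a front diagram of $K_i$ with arcs $x_1,\dots,x_n$. Any value $f(x_1)=x$ propagates uniquely along the relations $u^{p_j}d^{q_j}(x_j)\ast^{\varepsilon_j}x_{k_j}=x_{j+1}$. This is because $\ast^{\pm1}$ applied to $z$ is just $\sigma^{\pm1}(z)$, independent of the second argument. All relations except the closing one then hold automatically. The closing relation is $x=u^{p}d^{q}\sigma^{\omega}(x)$, with $p,q,\omega$ the numbers of up cusps, down cusps and the writhe. Hence $\operatorname{Col}_X(K)$ equals the number of fixed points of the permutation $u^{p}d^{q}\sigma^{\omega}$. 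The computation in Theorem \ref{theorem4.2}, using commutativity and $\sigma=(ud)^{-1}$, rewrites this permutation as
\[
\Phi_{t,r}:=u^{-r}d^{r}\sigma^{t}=u^{-r-t}d^{\,r-t},
\]
where $t=tb(K)$ and $r=rot(K)$. One small point to check is the direction of the closing map, but whichever convention is used, the count is the number of fixed points of a word in $u,d$ that depends only on $(t,r)$.

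Finally, for $K_1$ the relevant permutation is $\Phi_{t,r}=u^{-r-t}d^{r-t}$. For $K_2$, with invariants $(-t,-r)$, it is $\Phi_{-t,-r}=u^{r+t}d^{-r+t}$. Because $u$ and $d$ commute, $\Phi_{-t,-r}=\Phi_{t,r}^{-1}$. A permutation $\Phi$ and its inverse have the same fixed-point set, since $\Phi(x)=x\iff x=\Phi^{-1}(x)$. Therefore $\operatorname{Col}_X(K_1)=|\operatorname{Fix}\Phi_{t,r}|=|\operatorname{Fix}\Phi_{t,r}^{-1}|=\operatorname{Col}_X(K_2)$.

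The only delicate step is the reduction to a fixed-point count. It requires that the closing relation is the sole constraint, that every $x$ satisfying it genuinely extends to a coloring, and that the exponent bookkeeping (half-integers combining into integers, since $p+q$ is even) is done correctly. All of this follows the argument of Theorem \ref{theorem4.2} verbatim. The hypothesis that $K_1$ and $K_2$ have the same knot type is not actually needed.
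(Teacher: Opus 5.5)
Your proposal is correct and is essentially the paper's argument. The paper likewise reuses the computation from Theorem \ref{theorem4.2} to reduce the two coloring counts to the closing conditions $f(x_1)=u^{-r-t}d^{r-t}f(x_1)$ and $g(y_1)=u^{r+t}d^{t-r}g(y_1)$, then observes that one holds if and only if the other does; your remark that these permutations are mutually inverse because $u$ and $d$ commute, and so have the same fixed points, is exactly the justification the paper leaves implicit.
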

\begin{proof}
Suppose that the fundamental GL-racks of $K_1$ and $K_2$ are given by
\[
\operatorname{GLR}(K_1)=\langle x_1,\dots,x_n \mid r_1,\dots,r_n \rangle \text{ and } \operatorname{GLR}(K_2)=\langle y_1,\dots,y_m \mid \tilde{r}_1,\dots,\tilde{r}_m \rangle.
\]
Adapting the argument in the proof of Theorem \ref{theorem4.2}, we obtain
\[
f(x_1) = u^{-\frac{1}{2}(q-p)} d^{\frac{1}{2}(q-p)} u^{\frac{1}{2}(p+q)} d^{\frac{1}{2}(p+q)} \sigma^\omega f(x_1) = u^{-r} d^{r} \sigma^{t} f(x_1)=u^{-r-t}d^{r-t}f(x_1)
\]
and
\[
g(y_1) = u^{-\frac{1}{2}(\tilde{q}-\tilde{p})} d^{\frac{1}{2}(\tilde{q}-\tilde{p})} u^{\frac{1}{2}(\tilde{p}+\tilde{q})} d^{\frac{1}{2}(\tilde{p}+\tilde{q})} \sigma^{\tilde{\omega}} g(y_1) = u^{r} d^{-r} \sigma^{-t} g(y_1)=u^{r+t}d^{t-r}g(y_1).
\]
The first equation holds if and only if the second one also holds, therefore the number of colorings of $K_1$ and $K_2$ are equal.
\end{proof}

In what follows, we restrict ourselves to the case of GL-quandles. While a full characterization remains elusive for two Legendrian knots $K_1$ and $K_2$ of the same knot type with opposite Thurston–Bennequin numbers and rotation numbers, we nevertheless obtain some partial results in this setting. For colorings with respect to finite GL-quandles, if $S(K)$ is obtained from $K$ by the same number of positive and negative stabilizations, due to the fact $ud=\operatorname{id}$, $K$ and $S(K)$ have the same coloring invariant. Without loss of generality, let us assume that
\begin{center}
    $tb(K_1)=t=-tb(K_2)$ and $rot(K_1)=r=-rot(K_2)\geq0$.
\end{center}
If we perform $2r$ negative stabilizations on $K_1$, its Thurston-Bennequin number becomes $t-2r$ and its rotation number becomes $-r$. At this point, the difference of the Thurston-Bennequin numbers between $K_1$ and $K_2$ is $2t-2r$, while their rotation numbers coincide. It suffices to perform $|t-r|$ positive stabilizations and $|t-r|$ negative stabilizations on either $K_2$ or $S_{-}^{2r}(K_1)$ to make their classical invariants identical. Therefore, based on the previous discussion, the problem of whether the coloring numbers of $K_1$ and $K_2$ are equal reduces to discussing whether the coloring numbers of $K_1$ and $S_{-}^{2r}(K_1)$ coincide. For the special case of $r=0$, this condition holds automatically.

Since we are only concerned with GL-quandles here, another interesting question is how the coloring invariant of a Legendrian knot $K$ is related to that of its underlying topological knot $T(K)$. We end this paper with a result in this direction.

\begin{theorem}
Suppose that the Thurston-Bennequin number and rotation number of a Legendrian knot $K$ are $t$ and $r$, respectively. Let $T(K)$ denote the underlying topological knot type of $K$, and let $(X, \ast, u, u^{-1})$ be a finite GL-quandle. Then
\[
\operatorname{Col}_X\bigl(S_{-\operatorname{sgn}(r)}^{|r|}(K)\bigr)=\operatorname{Col}_X\bigl(T(K)\bigr).
\]
\end{theorem}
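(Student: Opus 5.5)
The plan is to build an explicit bijection between GL-quandle colorings of a front diagram of $K'=S_{-\operatorname{sgn}(r)}^{|r|}(K)$ and ordinary quandle colorings of the same diagram, viewed as a topological knot diagram with its cusps smoothed. The bijection comes from a ``gauge transformation'' by powers of $u$. Since smoothing cusps is a planar isotopy, that diagram represents $T(K)$, and the right-hand side is a topological invariant. So it suffices to compare colorings of one fixed diagram $D$ of $K'$.

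\emph{Step 1: reduce to rotation number zero.} Since $rot(S_{\pm}(K))=rot(K)\pm1$, applying $|r|$ stabilizations of sign $-\operatorname{sgn}(r)$ gives $rot(K')=0$. Hence in $D$ the numbers of up and down cusps are equal, $u(D)=d(D)$. Write $\operatorname{GLR}(K')=\langle x_1,\dots,x_n\mid u^{p_i}d^{q_i}(x_i)\ast^{\varepsilon_i}x_{k_i}=x_{i+1}\rangle$ as in the proof of Theorem \ref{theorem3.10}. Then $\sum_i p_i=\sum_i q_i$, and with $d=u^{-1}$ the $i$-th relation reads $u^{m_i}(x_i)\ast^{\varepsilon_i}x_{k_i}=x_{i+1}$ with $m_i=p_i-q_i$ and $\sum_i m_i=0$.

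\emph{Step 2: the gauge.} Set $M_1=0$ and $M_{i+1}=M_i+m_i$. The condition $\sum m_i=0$ makes this consistent around the knot, i.e.\ $M_{n+1}=M_1$. Given a GL-coloring $f$, define $g(x_i)=u^{M_i}f(x_i)$. Using $u(a\ast b)=u(a)\ast b$ (and the same for $\ast^{-1}$, since $\ast b$ is a bijection commuting with $u$) together with $a\ast u^{k}(b)=a\ast b$, which follows from axiom (3) of Definition \ref{definition2.3}, we get
\[
g(x_{i+1})=u^{M_i+m_i}f(x_{i+1})=u^{M_i}\bigl(u^{m_i}f(x_i)\bigr)\ast^{\varepsilon_i}f(x_{k_i})=g(x_i)\ast^{\varepsilon_i}g(x_{k_i}).
\]
This is exactly the quandle coloring condition at each crossing of the smoothed diagram. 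The inverse map $f(x_i)=u^{-M_i}g(x_i)$ works by the same computation, so we obtain a bijection $\operatorname{Hom}(\operatorname{GLR}(K'),X)\leftrightarrow\operatorname{Hom}(Q(T(K)),X)$.

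\emph{Main obstacle: bookkeeping of the cusps.} One must check, using the orientation conventions of Figure \ref{fig2}, that the exponents $p_i,q_i$ in the relations really count up and down cusps in the way used by the rotation formula. Equivalently, the total exponent of $u$ picked up going once around $D$ must be $\pm(u(D)-d(D))=\mp 2\,rot$, so that it vanishes exactly when $rot=0$; this is the same fact used in Theorem \ref{theorem4.2}. I would also verify that a semi-arc passing through cusps but no undercrossing is handled correctly. In that case the labels on either side of the cusp differ by $u^{\pm1}$, and the gauge absorbs this into a single quandle arc label, which matches arcs of the smoothed diagram. The well-definedness of $g$ on semi-arcs versus arcs should then follow directly.
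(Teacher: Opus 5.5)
Your approach is the paper's own: pass to $S_{-\operatorname{sgn}(r)}^{|r|}(K)$, whose cusp exponents sum to zero, and absorb those exponents by a gauge transformation by powers of $u$, using $u(a\ast b)=u(a)\ast b$ and $a\ast u^{k}(b)=a\ast b$. The paper does exactly this, with all the stabilizations placed on the arc $x_1$.

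However, your gauge has the wrong sign, so the displayed computation is false as written. With $g(x_i)=u^{M_i}f(x_i)$ and $f(x_{i+1})=u^{m_i}f(x_i)\ast^{\varepsilon_i}f(x_{k_i})$ you actually get
\[
g(x_{i+1})=u^{M_i+m_i}f(x_{i+1})=u^{M_i+2m_i}f(x_i)\ast^{\varepsilon_i}f(x_{k_i}).
\]
This differs in general from $g(x_i)\ast^{\varepsilon_i}g(x_{k_i})=u^{M_i}f(x_i)\ast^{\varepsilon_i}f(x_{k_i})$; your middle equality silently drops a factor $u^{m_i}$.

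The correct gauge is $g(x_i)=u^{-M_i}f(x_i)$, with inverse $f(x_i)=u^{M_i}g(x_i)$. Then
\[
g(x_{i+1})=u^{-M_i-m_i}\bigl(u^{m_i}f(x_i)\ast^{\varepsilon_i}f(x_{k_i})\bigr)=u^{-M_i}f(x_i)\ast^{\varepsilon_i}f(x_{k_i})=g(x_i)\ast^{\varepsilon_i}g(x_{k_i}).
\]
The wrap-around relation still closes up because $M_{n+1}=M_1=0$. This is precisely the paper's substitution $y_i=u^{-2r-\sum_{j<i}a_j}(x_i)$.

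With this sign corrected, your argument is complete and coincides with the paper's proof. Your explicit appeal to axiom (3), which lets you replace $f(x_{k_i})$ by $g(x_{k_i})$, fills a step the paper leaves implicit: it writes $x_{k_i}$ rather than $y_{k_i}$ in the transformed relations.
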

\begin{proof}
With a given front projection of $K$, we can obtain a diagram of the underlying topological knot $T(K)$ by smoothing all cusps. Then we have 
    \[
    \operatorname{GLR}_{K} = \langle x_1, \cdots, x_n \mid r_1, \cdots, r_n \rangle
    \]
   and
   \[
   \operatorname{Q}(T(K)) = \langle x_1, \cdots, x_n \mid r_1', \cdots, r_n' \rangle
   \]
   Here, the relations $r_i$ are of the form
   \[
   u^{a_i}(x_i) \ast^{\varepsilon_i} x_{k_i} = x_{i+1},
   \]
   and the relations $r_i'$ are of the form
   \[
   x_i \ast^{\varepsilon_i} x_{k_i} = x_{i+1},
   \]
   where $\varepsilon_i = \pm 1$, $x_{n+1} = x_1$, and $\sum_{i=1}^n a_i = -2r$. 
   
We only consider the case $r>0$, the case $r\leq0$ is analogous. Since the stabilization of a Legendrian knot is independent of the choice of the place where the stabilization is taken, we may conclude the structure of $\operatorname{GLR}(S_{-}^r(K))$ as following.
   \[\begin{cases}
   r_1'': & u^{2r + a_1}(x_1) \ast^{\varepsilon_1} x_{k_1} = x_2, \\
   r_2'': & u^{a_2}(x_2) \ast^{\varepsilon_2} x_{k_2} = x_3, \\
   & \vdots \\
   r_{n-1}'': & u^{a_{n-1}}(x_{n-1}) \ast^{\varepsilon_{n-1}} x_{k_{n-1}} = x_n,\\
   r_n'': & u^{a_n}(x_n) \ast^{\varepsilon_n} x_{k_n} = x_1,
   \end{cases}\]
   These relations are equivalent to
   \[\begin{cases}
   r_1'': & x_1 \ast ^{\varepsilon_1} x_{k_1} = u^{-2r-a_1}(x_2), \\
   r_2'': & u^{-2r-a_1}(x_2) \ast ^{\varepsilon_2} x_{k_2} = u^{-2r-a_1-a_2}(x_3), \\
   & \vdots \\
   r_{n-1}'': & u^{-2r-\sum_{i=1}^{n-2}a_i}(x_{n-1}) \ast^{\varepsilon_{n-1}} x_{k_{n-1}} = u^{-2r-\sum_{i=1}^{n-1}a_i}(x_n),\\
   r_n'': & u^{-2r-\sum_{i=1}^{n-1}a_i}(x_n) \ast^{\varepsilon_n} x_{k_n} = u^{-2r-\sum_{i=1}^na_i}(x_1)=u^{-2r-(-2r)}(x_1)=x_1.
   \end{cases}\]
   We set
   \[
   y_1 = x_1,\quad y_2 = u^{-2r - a_1}(x_2), \cdots, y_{n-1} = u^{-2r - \sum_{i=1}^{n-2} a_i}(x_{n-1}),\quad y_n = u^{-2r - \sum_{i=1}^{n-1} a_i}(x_n).
   \]
   It then follows that $y_i$, $y_{i+1}$, and $y_{k_i}$ satisfy the relation $r_i'$. This builds a one-to-one correspondence between the colorings of $K$ and that of $T(K)$, which implies that $\operatorname{Col}_X(S_{-}^{r}(K)) = \operatorname{Col}_X(T(K))$.
\end{proof}

\begin{corollary}
All Legendrian knots of the same knot type with rotation number zero have the same coloring number with respect to any finite GL-quandle.
\end{corollary}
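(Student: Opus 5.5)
The plan is to reduce everything to the preceding theorem, which for a finite GL-quandle $(X,\ast,u,u^{-1})$ gives
\[
\operatorname{Col}_X\bigl(S_{-\operatorname{sgn}(r)}^{|r|}(K)\bigr)=\operatorname{Col}_X\bigl(T(K)\bigr),
\]
where $r=rot(K)$. Let $K_1$ and $K_2$ be two Legendrian knots of the same knot type with $rot(K_1)=rot(K_2)=0$. Then $\operatorname{sgn}(0)=0$ and $|r|=0$, so the stabilization $S^{0}$ is the identity. The theorem therefore reads $\operatorname{Col}_X(K_i)=\operatorname{Col}_X(T(K_i))$ for $i=1,2$.

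Since $T(K_1)=T(K_2)$ as topological knot types, and the quandle coloring number is a topological knot invariant, we get $\operatorname{Col}_X(K_1)=\operatorname{Col}_X(K_2)$.

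The only point to check is that the proof of the preceding theorem really covers $r=0$. There the text says ``the case $r\le 0$ is analogous'', so I will verify it directly.

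First, in a GL-quandle we have $d=u^{-1}$. So each relation of $\operatorname{GLR}(K)$ takes the form
\[
u^{a_i}(x_i)\ast^{\varepsilon_i}x_{k_i}=x_{i+1}, \qquad \sum_i a_i=-2r=0.
\]
Next, set $y_1=x_1$ and $y_{i}=u^{-\sum_{j<i}a_j}(x_i)$. Using $u(x\ast y)=u(x)\ast y$ and $x\ast u(y)=x\ast y$, these relations become exactly the quandle relations $y_i\ast^{\varepsilon_i}y_{k_i}=y_{i+1}$ of the diagram of $T(K)$ obtained by smoothing the cusps.

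The only place this could fail is the closing relation. It closes up precisely because $\sum a_i=0$, so no stabilization is needed. This gives a bijection between colorings of $K$ and quandle colorings of $T(K)$.

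Finally, invariance of $\operatorname{Col}_X(T(K))$ under Reidemeister moves is standard, since $(X,\ast)$ is a quandle. This finishes the argument.

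Alternatively, one can combine Theorem \ref{theorem4.1} with the observation that, for GL-quandles, equal numbers of positive and negative stabilizations do not change the coloring number. However, $tb$ may differ between $K_1$ and $K_2$, so the route through $T(K)$ is the cleaner one, and it is the one I will follow.
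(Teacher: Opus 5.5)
Your proposal is correct and follows the paper's route: the paper proves the corollary by specializing the preceding theorem to $r=0$, where $S^{0}$ is the identity so that $\operatorname{Col}_X(K)=\operatorname{Col}_X(T(K))$, and then using that the quandle coloring number of $T(K)$ depends only on the topological knot type. Your direct check of the $r=0$ case via the substitution $y_i=u^{-\sum_{j<i}a_j}(x_i)$, which closes up because $\sum_i a_i=0$, is the same computation as in the paper's proof of that theorem, so it fills in the ``analogous'' case without changing the argument.
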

\begin{proof}
The coloring invariant of any Legendrian knot with rotation number zero is equal to the coloring invariant of the underlying topological knot.
\end{proof}

\bibliographystyle{plain}
\bibliography{ref}
\end{document}